\newtheorem{theorem}{Theorem}[subsection]
\newtheorem{proposition}[theorem]{Proposition}
\newtheorem{lemma}[theorem]{Lemma}
\newtheorem{corollary}[theorem]{Corollary}
\theoremstyle{definition}
\newtheorem{definition}[theorem]{Definition}
\newtheorem{remark}[theorem]{Remark}
\newtheorem{example}[theorem]{Example}
\numberwithin{equation}{subsection}
\renewcommand{\S}{\mathscr S}
\newcommand{\A}{\mathnormal A}
\newcommand{\sr}{\mathnormal\Sigma}
\newcommand{\co}{\mathnormal\Delta}
\newcommand{\comb}{\mathrm{-comb}}
\newcommand{\fR}{\textsf{R}}
\newcommand{\fS}{\textsf{S}}
\newcommand{\fT}{\textsf{T}}
\newcommand{\ppc}{ppc}
\newcommand{\faa}{\textsf{aa}}
\newcommand{\fo}{\textsf{2a}}
\newcommand{\fm}{\textsf{m}}
\newcommand{\fc}{\textsf{c}}
\newcommand{\fp}{\textsf{p}}
\newcommand{\fq}{\textsf{q}}
\newcommand{\fx}{\textsf x}
\newcommand{\fy}{\textsf y}
\newcommand{\fz}{\textsf z}
\newcommand{\fu}{\textsf u}
\newcommand{\fv}{\textsf v}
\newcommand{\fw}{\textsf w}
\newcommand{\fma}{\textsf{a}}
\newcommand{\fmb}{\textsf{b}}
\newcommand{\bd}{\mathsf{bd}}
\newcommand{\bc}{\mathsf{bc}}
\newcommand{\ab}{\mathsf{ab}}
\renewcommand{\a}{\mathsf{a}}
\renewcommand{\b}{\mathsf{b}}
\renewcommand{\c}{\mathsf{c}}
\renewcommand{\d}{\mathsf{d}}
\newcommand{\f}{\mathsf{f}}
\newcommand{\g}{\mathsf{g}}
\newcommand{\ad}{\mathsf{ad}}
\newcommand{\ag}{\mathsf{ag}}
\newcommand{\bg}{\mathsf{bg}}
\newcommand{\dg}{\mathsf{dg}}
\newcommand{\ac}{\mathsf{ac}}
\newcommand{\e}{\mathsf{e}}
\newcommand{\pssclanlabel}{}
\newcounter{pssnumber}
\newcommand{\pssclan}[1]{\subsubsection*{Clan $\mathbf{#1}$}\renewcommand{\pssclanlabel}{#1}\setcounter{pssnumber}{0}}
\newcommand{\pssfamily}[1]{\bigskip\noindent {\bf #1} \smallskip}
\newcommand{\pss}{\refstepcounter{pssnumber}\noindent{\bf $\mathbf\pssclanlabel$--$\mathbf\thepssnumber$.}\ }
\newcommand{\pssd}{\noindent{\bf $\mathbf\pssclanlabel$--$\mathbf{\thepssnumber'}$.}\ }
\begin{document}

\title{Primitive spherical systems}
\author{P.\ Bravi} 
\date{}
\maketitle

\begin{abstract}
A spherical system is a combinatorial object, arising in the theory of wonderful varieties, defined in terms of a root system. All spherical systems can be obtained by means of some general combinatorial procedures (such as parabolic induction and wonderful fiber product) from the so-called primitive spherical systems. Here we classify primitive spherical systems. As an application, we prove that the quotients of a spherical system are in correspondence with the so-called distinguished subsets of colors.
\end{abstract}

\section*{Introduction}

Let $G$ be a semisimple group over the complex numbers. The spherical $G$-systems are combinatorial objects defined axiomatically in terms of the root system of $G$.

Let $X$ be a wonderful $G$-variety: a smooth projective quasi-homogeneous $G$-variety such that the irreducible components of the $G$-boundary are quasi-homogeneous of codimension 1 and have non-empty transversal intersection, see \cite{Lu01}. One can associate  to $X$ a special combinatorial invariant which satisfies the axioms of spherical $G$-system. The Luna conjecture states that wonderful $G$-varieties are classified by spherical $G$-systems. 

Without loss of generality one can assume that the group $G$ is adjoint.

Many partial results related to the conjecture are now known, see \cite{Lu01, P03, BP05, Br07, BCF08, Lo09, BCF09}. See also \cite{CF08,CF09}.

D.~Luna's original proof of the conjecture in type $\mathsf A$ is based on some reduction steps to some primitive spherical systems and the case-by-case proof that all primitive spherical systems correspond to one and only one wonderful variety. 

This approach works for any group $G$ (see \cite{BP09}) and therefore a way to prove the conjecture consists in checking the correspondence on the primitive spherical systems.

Here we explicitly classify primitive spherical systems for any group $G$ of adjoint type. 

This classification allows to describe the structure of all spherical systems, and it may reveal itself useful not only in the proof of the Luna conjecture.

For instance, we also analyze the quotients of a general spherical system. Indeed, we determine all the quotients of primitive spherical systems and prove that in general all the so-called distinguished subsets of colors are good, that is, give rise to well-defined quotient spherical systems. This was conjectured in \cite{Lu01} as well.

\bigskip
Without referring to the group $G$, spherical $G$-systems will just be called spherical $R$-systems of adjoint type, where $R$ is the root system of $G$.

The list presented here includes some already known classes of primitive spherical $R$-systems: those with $R$ of type $\mathsf {A\ D\ E}$ (\cite{Lu01,BP05,Br07}), of type $\mathsf {A\ C}$ (\cite{P03}), of type $\mathsf F$ (\cite{BL09}) or the so-called strict primitive spherical systems (\cite{BCF09}). 

A remark is necessary: our present definition of primitive spherical system is slightly more restrictive than that of the previous papers (\cite{BP05,Br07}), see Definitions \ref{def:dec} and \ref{def:pss}. 

\bigskip
In Section~\ref{sec:def} we give the definition of spherical system and of other basic notions; for a more accessible introduction see \cite{Lu01,BL09}. 

In Section~\ref{sec:cla} we give the definition of primitive spherical system and of spherical system with a primitive positive 1-comb. We give the list of primitive spherical systems and of spherical systems with a primitive positive 1-comb. In \ref{subsec:col} we prove that such lists are complete.

In Section~\ref{sec:prop} we use the classification obtained in the preceding section to prove that all the distinguished subsets of colors are good.

In Appendix~\ref{app:low} we give all spherical systems of rank $\leq2$, with their minimal quotients.

In Appendix~\ref{app:qpss} we give all the minimal quotients of primitive spherical systems and of spherical systems with a primitive positive 1-comb (with rank $>2$).

\bigskip
The author is indebted to D.~Luna and G.~Pezzini for their help and support.

\section{Basic Definitions}\label{sec:def}

\subsection{Spherical systems}

Let $R$ be a reduced root system. Let $S$ be a set of simple roots of $R$. Simple roots of irreducible root systems will be labelled as in \cite{Bo}.

Let $\sigma$ be an element of $\mathbb N S$, say $\sigma=\sum_{\alpha\in S}n_\alpha\alpha$, define $\mathrm{supp}\,\sigma$ to be the subset of simple roots $\alpha$ such that $n_\alpha\neq0$. 

\begin{definition}\label{def:sr}
The set of spherical $R$-roots of adjoint type, denoted by $\Sigma_{\mathrm{ad}}(R)$, is the set of $\sigma\in\mathbb N S$ such that 
\begin{itemize}
\item $\sigma=\alpha+\beta$ where $\alpha$ and $\beta$ are orthogonal simple roots ($\sigma$ is said to be of type $aa$), 
\item or $\mathrm{supp}\,\sigma$ is the set of simple roots of an irreducible root subsystem and, after restricting $S$ to $\mathrm{supp}\,\sigma$, $\sigma$ is one of the following:
\begin{center}
\begin{tabular}{lll}
type of supp & $\sigma$ & type of $\sigma$\\
\hline
$\mathsf A_n$, $n\geq 1$ & $\sum_{i=1}^n\alpha_i$ & $a(n)$\\
$\mathsf A_1$ & $2\alpha_1$ & $2a$\\
$\mathsf B_n$, $n\geq 2$ & $\sum_{i=1}^n\alpha_i$ & $b(n)$\\
 & $\sum_{i=1}^n2\alpha_i$ & $2b(n)$\\
$\mathsf B_3$ & $\alpha_1+2\alpha_2+3\alpha_3$\\
$\mathsf C_n$, $n\geq 3$ & $\alpha_1+(\sum_{i=2}^{n-1}2\alpha_i)+\alpha_n$ & $c(n)$\\
$\mathsf D_n$, $n\geq 3$ & $(\sum_{i=1}^{n-2}2\alpha_i)+\alpha_{n-1}+\alpha_n$ & $d(n)$\\
$\mathsf F_4$ & $\alpha_1+2\alpha_2+3\alpha_3+2\alpha_4$ & $f$\\
$\mathsf G_2$ & $2\alpha_1+\alpha_2$ & $g$\\
 & $4\alpha_1+2\alpha_2$ & $2g$\\
 & $\alpha_1+\alpha_2$ \\
\end{tabular}
\end{center}
\end{itemize}
\end{definition}

We also set $d(2)=aa$, $b(1)=a(1)$, $2b(1)=2a$, $c(2)=b(2)$.

\begin{definition}
Let $S^p$ be a subset of $S$ and $\sigma\in\Sigma_{\mathrm{ad}}(R)$ a spherical $R$-root of adjoint type, $S^p$ and $\sigma$ are said to be compatible if $S^{pp}(\sigma)\subset S^p\subset S^p(\sigma)$ where $S^p(\sigma)$ is the set of simple roots orthogonal to $\sigma$ and $S^{pp}(\sigma)$ is equal to
\begin{itemize}
\item $S^p(\sigma)\cap\mathrm{supp}\,\sigma\setminus\{\alpha_n\}$, if $\sigma=\sum_{i=1}^n\alpha_i$ with support of type $\mathsf B_n$,
\item $S^p(\sigma)\cap\mathrm{supp}\,\sigma\setminus\{\alpha_1\}$, if $\sigma$ has support of type $\mathsf C_n$, 
\item $S^p(\sigma)\cap\mathrm{supp}\,\sigma$, otherwise.
\end{itemize}
\end{definition}

\begin{definition}
A triple $\S=(S^p,\sr,\A)$, where
\begin{itemize}
\item $S^p\subset S$
\item $\sr\subset \Sigma_{\mathrm{ad}}(R)$ without proportional elements
\item $\A$ is a finite set endowed with a pairing $c\colon\A \times\sr\to\mathbb Z$ (for all $\alpha\in S\cap\sr$ set $\A(\alpha)=\{D\in\A: c(D,\alpha)=1\}$),
\end{itemize}
is called a spherical $R$-system of adjoint type if
\begin{itemize}
\item[(A1)] for all $D \in \A$ and $\sigma \in \sr$, $c(D,\sigma)\leq 1$, and if $c(D,\sigma)=1$ then $\sigma\in S$; 
\item[(A2)] for all $\alpha \in S\cap\sr$, $\mathrm{card}(\A(\alpha))=2$, and if $\A(\alpha)=\{D_\alpha^+,D_\alpha^-\}$ then $c(D_\alpha^+,\sigma)+ c(D_\alpha^-,\sigma)= \langle \alpha^\vee , \sigma \rangle$ for all $\sigma\in\sr$;
\item[(A3)] $\A=\cup_{\alpha\in S\cap\sr}\A(\alpha)$;
\item[($\Sigma 1$)] if $2\alpha \in 2S \cap\sr$ then $\frac{1}{2}\langle\alpha^\vee, \sigma\rangle\in\mathbb Z_{\leq0}$ for all $\sigma \in \sr \setminus \{ 2\alpha \}$; 
\item[($\Sigma 2$)] if $\alpha$ and $\beta$ are orthogonal simple roots with $\alpha + \beta \in \sr$ then $\langle \alpha ^\vee , \sigma \rangle = \langle \beta ^\vee , \sigma \rangle$ for all $\sigma \in \sr$; 
\item[(S)] $S^p$ and $\sigma$ are compatible for all $\sigma \in \sr$. 
\end{itemize}
\end{definition}

The elements of $\sr$ are called spherical roots of $\S$. The cardinality of $\sr$ is called the rank of $\S$. The pairing $c\colon\A\times\sr\to\mathbb Z$ is called restricted Cartan pairing.

In the following with spherical $R$-system we will mean spherical $R$-system of adjoint type.

\begin{definition}
Let $\S=(S^p,\sr,\A)$ be a spherical $R$-system. Set $S^p\sqcup S^a\sqcup S^{2a}\sqcup S^b=S$ such that $S^a=S\cap\sr$ and $S^{2a}=2S\cap\sr$.
The set of colors $\co$ of $\S$ is the finite set $\co=\co^a\sqcup\co^{2a}\sqcup\co^b$ endowed with a pairing $c\colon \co\times\sr\to\mathbb Z$ as follows
\begin{itemize}
\item $\co^a=\A$, 
\item there exists a bijective map $2\alpha\mapsto D_{2\alpha}$ from $S^{2a}$ to $\co^{2a}$, with $c(D_{2\alpha},\sigma)=\frac{1}{2}\langle\alpha^\vee,\sigma\rangle$ for all $\sigma\in\sr$,
\item there exists a surjective map $\alpha\mapsto D_{\alpha}$ from $S^b$ to $\co^b$ such that $D_\alpha=D_\beta$ if and only if $\alpha\perp\beta$ and $\alpha+\beta\in\sr$, with $c(D_\alpha,\sigma)=\langle\alpha^\vee,\sigma\rangle$ for all $\sigma\in\sr$.
\end{itemize} 
Furthermore, set $\co(\alpha)=\A(\alpha)=\{D^+_\alpha,D^-_\alpha\}$ if $\alpha\in S^a$, $\co(\alpha)=\{D_{2\alpha}\}$ if $\alpha\in S^{2a}$ and $\co(\alpha)=\{D_\alpha\}$ if $\alpha\in S^b$.
\end{definition}

\begin{definition}
The difference between the cardinality of $\co$ and the cardinality of $\sr$ (the rank of $\S$) is called defect and denoted by $\mathrm d(\S)$.
\end{definition}

\begin{remark}
From the classification of rank 2 spherical systems (see Appendix~\ref{app:low} or \cite{W96}) it turns out that all pairs of spherical roots of any spherical system have non-positive scalar product. This implies that the spherical roots of any spherical system are linearly independent\footnote{We thank A.~Maffei for having pointed out to us this easy argument.}. This implies also that $\sr$ is a basis of a reduced root system, but the latter plays no role in the present paper.
\end{remark}

The above pairing $c$, called Cartan pairing, can be extended by $\mathbb Z$-linearity to a pairing $c\colon\mathbb Z\co\times\mathbb Z\sr\to\mathbb Z$.

\subsection{Localization and induction}

\begin{definition}
Let $\S=(S^p,\sr,\A)$ be a spherical $R$-system. Let $S'$ be a subset of simple roots and $R'$ the corresponding root subsystem. The spherical $R'$-system obtained from $\S$ by localization is $\S'=((S')^p,\sr',\A')$:
\begin{itemize}
\item $(S')^p=S^p\cap S'$,
\item $\sr'=\{\sigma\in\sr:\mathrm{supp}\,\sigma\subset S'\}$,
\item $\A'=\cup\A(\alpha)$ for all $\alpha\in \sr\cap S'$.
\end{itemize} 
\end{definition}

\begin{definition}
A spherical $R$-system $\S=(S^p,\sr,\A)$ is called cuspidal if $\mathrm{supp}(\sr)=S$. 
\end{definition}

\begin{definition}\label{def:ind}
Let $\S'=((S')^p,\sr',\A')$ be a spherical $R'$-system and let $S'\subset S$. The spherical $R$-system obtained from $\S'$ by induction is $\S=((S')^p,\sr',\A')$.
\end{definition}

\subsection{Quotients}\label{subsec:quo}

Let $\S=(S^p,\sr,\A)$ be a spherical $R$-system with set of colors $\co$. An element $D$ of $\mathbb Z\co$ is called positive if $c(D,\sigma)\geq0$ for all $\sigma\in\sr$.

\begin{definition}
A subset $\co'\subset\co$ of colors of $\S$ is called distinguished if there exists a positive element in $\mathbb N_{>0}\co'$.
\end{definition}

Consider the set $\sr/\co'$ of minimal generators of the semigroup 
\[\{\sigma\in\mathbb N\sr : c(D,\sigma)=0\ \forall D\in\co'\}\]
and the triple $\S/\co'=(S^p/\co',\sr/\co',\A/\co')$ where $S^p/\co'=\{\alpha\in S : \co(\alpha)\subset\co'\}$ and $\A/\co'$ is the subset of $\A$ given by $S\cap\sr/\co'$.

\begin{definition}
A distinguished subset $\co'\subset\co$ is called good if $\S/\co'$ is a spherical $R$-system. In this case $\S/\co'$ is called quotient spherical $R$-system.
\end{definition}

The set of colors of the quotient spherical system $\S/\co'$ can be identified with $\co\setminus\co'$. 

\begin{definition}
A good distinguished subset $\co'$ is called homogeneous if $\sr/\co'=\emptyset$.
\end{definition}

A quotient of a spherical system given by a minimal good distinguished subset of colors will be called minimal. 

\subsection{Luna diagrams}

Let us recall how to visualize a spherical system via its Luna diagram. 

Spherical roots are represented on the Dynkin diagram of their support as in Table~\ref{tab:dsr}.

\begin{table}\caption{Spherical roots}\label{tab:dsr}
\begin{center}
\begin{tabular}{|cl|}
\hline
diagram&spherical root\\
\hline
\begin{picture}(3000,1500)\put(300,-600){\line(1,0){2400}}\multiput(0,0)(2400,0){2}{\put(300,0){\line(0,-1){600}}\put(300,300){\circle{600}}\put(300,300){\circle*{150}}}\end{picture}&$\alpha_1+\alpha_1'$\\
\begin{picture}(600,2100)\put(300,300){\usebox{\aone}}\end{picture}& $\alpha_1$\\
\begin{picture}(6000,2100)\put(300,300){\usebox{\mediumam}}\end{picture}&$\sum_{i=1}^n\alpha_i$, $n\geq 2$\\
\begin{picture}(600,2100)\put(300,300){\usebox{\aprime}}\end{picture}&$2\alpha_1$\\
\begin{picture}(7800,2100)\put(300,300){\usebox{\shortbm}}\end{picture}&$\sum_{i=1}^n\alpha_i$, $n\geq 2$\\
\begin{picture}(7800,2100)\put(300,300){\usebox{\shortbprimem}}\end{picture}&$\sum_{i=1}^n2\alpha_i$, $n\geq 2$\\
\begin{picture}(4200,2100)\put(300,300){\usebox{\bthirdthree}}\end{picture}&$\alpha_1+2\alpha_2+3\alpha_3$\\
\begin{picture}(9000,2100)\put(0,300){\usebox{\shortcm}}\end{picture}&$\alpha_1+(\sum_{i=2}^{n-1}2\alpha_i)+\alpha_n$, $n\geq 3$\\
\begin{picture}(6900,2400)\put(300,300){\usebox{\shortdm}}\end{picture}&
$(\sum_{i=1}^{n-2}2\alpha_i)+\alpha_{n-1}+\alpha_n$, $n\geq 3$\\
\begin{picture}(6000,2400)\put(300,300){\usebox{\ffour}}\end{picture}&$\alpha_1+2\alpha_2+3\alpha_3+2\alpha_4$\\
\begin{picture}(2400,2100)\put(300,300){\usebox{\gtwo}}\end{picture}&$2\alpha_1+\alpha_2$\\
\begin{picture}(2400,2100)\put(300,300){\usebox{\gprimetwo}}\end{picture}&$4\alpha_1+2\alpha_2$\\
\begin{picture}(2400,2100)\put(300,300){\usebox{\gsecondtwo}}\end{picture}&$\alpha_1+\alpha_2$\\
\hline
\end{tabular}
\end{center}
\end{table}

Further not shadowed circles around vertices may occur: $S^p$ equals the set of vertices having no circles around, below or above. 

The set $S\cap\sr$ corresponds to the set of vertices which have circles above and below. For each $\alpha\in S\cap\sr$, these two circles are identified with the elements of $\A(\alpha)$, the circle above to $D_\alpha^+$, where $D_\alpha^+$ is such that $c(D_\alpha^+,\sigma)\in\{1,0-1\}$, for every spherical root $\sigma$. Circles in different $\A(\alpha)$'s are joined by a line, if they correspond to the same element in $\A$. Finally, for every spherical root $\sigma$ not orthogonal to $\alpha$ such that $c(D_\alpha^+, \sigma) = -1$, there is an arrow (usually only a symbol ``$<$'' or ``$>$''), starting from the circle corresponding to $D_\alpha^+$ and pointing toward $\sigma$. The set $\A$ and the restricted Cartan pairing $c\colon \A \times\sr\to \mathbb Z$ can then be recovered by Axiom~A2.

An arrow from the diagram of $\S$ to the diagram of $\S'$ means that the spherical system $\S$ has $\S'$ as quotient spherical system. Minimal quotients with decreasing defect are denoted by dashed arrows, minimal quotients with non-decreasing defect are denoted by continuous arrows.

\subsection{Spherical systems of rank $\leq2$}\label{subsec:low}

The spherical systems of rank 1 and of rank 2 are known (\cite{W96}), their classification can directly be deduced from the axioms of spherical systems.

In Appendix~\ref{app:low} we give the Luna diagrams of all cuspidal spherical systems of rank 1 and of all cuspidal spherical systems of rank 2 that are not direct product of two spherical systems of rank 1 (see the appendix for a precise definition of direct product). We provide also their minimal quotient spherical systems.

\begin{remark}\label{rem:lgood}
All distinguished subsets of colors of spherical systems of rank $\leq2$ are good. 
\end{remark}

\section{Classification of spherical systems}\label{sec:cla}

\subsection{Primitive spherical systems}\label{subsec:pss}

\subsubsection*{Decomposition}

\begin{definition}\label{def:dec}
Let $\S=(S^p,\sr,\A)$ be a spherical $R$-system. Let $\co'$ and $\co''$ be good distinguished subsets such that
\begin{itemize}
\item $(S^p/\co'\setminus S^p)\perp (S^p/\co''\setminus S^p)$ and
\item $\sr\subset(\sr/\co'\cup\sr/\co'')$. 
\end{itemize}
Then we say that $\co'$ and $\co''$ decompose\footnote{Notice that this definition is less restrictive than that of the previous papers (\cite{BP05,Br07}).} $\S$, and that $\S$ is fiber product of $\S/\co'$ and $\S/\co''$ over $\S/(\co'\cup\co'')$.
\end{definition} 

\begin{example}
\[\begin{picture}(21000,9750)
\put(8700,8400){
\multiput(0,0)(1800,0){2}{\usebox{\edge}}
\multiput(0,0)(1800,0){3}{\usebox{\aone}}
\put(0,600){\usebox{\toe}}
\put(3600,600){\usebox{\tow}}
\multiput(0,1350)(3600,0){2}{\line(0,-1){450}}
\put(0,1350){\line(1,0){3600}}
}
\put(7800,7500){\vector(-2,-1){3000}}
\put(300,4650){
\multiput(0,0)(1800,0){2}{\usebox{\edge}}
\multiput(0,0)(3600,0){2}{\usebox{\aone}}
\put(1800,0){\usebox{\wcircle}}
\multiput(0,1350)(3600,0){2}{\line(0,-1){450}}
\put(0,1350){\line(1,0){3600}}
}
\put(13200,7500){\multiput(0,0)(600,-300){5}{\multiput(0,0)(20,-10){15}{\line(1,0){20}}}\put(2700,-1350){\vector(2,-1){300}}}
\put(17100,4650){
\multiput(0,0)(1800,0){2}{\usebox{\edge}}
\multiput(0,0)(3600,0){2}{\usebox{\wcircle}}
\put(1800,0){\usebox{\aone}}
\multiput(0,-1350)(3600,0){2}{\line(0,1){1050}}
\put(0,-1350){\line(1,0){3600}}
}
\put(4800,3300){\multiput(0,0)(600,-300){5}{\multiput(0,0)(20,-10){15}{\line(1,0){20}}}\put(2700,-1350){\vector(2,-1){300}}}
\put(16200,3300){\vector(-2,-1){3000}}
\put(8700,900){
\multiput(0,0)(1800,0){2}{\usebox{\edge}}
\multiput(0,0)(1800,0){3}{\usebox{\wcircle}}
\multiput(0,-900)(3600,0){2}{\line(0,1){600}}
\put(0,-900){\line(1,0){3600}}
}
\end{picture}\]
\end{example}

\subsubsection*{Positive combs}

\begin{definition}
Let $\S=(S^p,\sr,\A)$ be a spherical $R$-system. An element $D$ of $\A$ that is positive in the sense of \ref{subsec:quo} is called positive $n$-comb, where $n$ equals the cardinality of $S_D=\{\alpha\in S : D\in\co(\alpha)\}$. 
\end{definition}

By abuse of terminology a spherical system with $\sr=S$ and a positive comb $D$ with $S_D=S$ is often called an $n$-comb (with $n=\mathrm{card}\,S$).

\begin{example}
The following has a positive 2-comb
\[\begin{picture}(6900,2250)(-300,-900)
\multiput(0,0)(4500,0){2}{\usebox{\rightbiedge}}
\put(0,0){\usebox{\gcircle}}
\put(1800,0){\usebox{\aone}}
\multiput(4500,0)(1800,0){2}{\usebox{\aone}}
\put(6300,600){\usebox{\tow}}
\multiput(1800,1350)(2700,0){2}{\line(0,-1){450}}
\put(1800,1350){\line(1,0){2700}}
\end{picture}\]
\end{example}

\subsubsection*{Tails}

\begin{definition}\label{def:tail}
Let $\S=(S^p,\sr,\A)$ be a spherical $R$-system. Let $\sigma\in\sr$ have support included in say $\{\alpha_1,\ldots,\alpha_n\}$, set of simple roots of an irreducible component of $R$. Let $\co'$ be a good distinguished subset of colors such that $\sr/\co'=\{\sigma\}$. Then $\sigma$ is called:
\begin{itemize}
\item tail of type $b(m)$, $1\leq m\leq n$, if $R$ is of type $\mathsf B_n$, $\sigma=\alpha_{n-m+1}+\ldots+\alpha_n$ and $\alpha_n\in S^p$ if $m>1$ (or $c(D^+_{\alpha_n},\sigma')=c(D^-_{\alpha_n},\sigma')$ for all $\sigma'\in\sr$ if $m=1$); 
\item tail of type $2b(m)$, $1\leq m\leq n$, if $R$ is of type $\mathsf B_n$ and $\sigma=2\alpha_{n-m+1}+\ldots+2\alpha_n$; 
\item tail of type $c(m)$, $2\leq m\leq n$, if $R$ is of type $\mathsf C_n$ and $\sigma=\alpha_{n-m+1}+2\alpha_{n-m+2}+\ldots+2\alpha_{n-1}+\alpha_n$;
\item tail of type $d(m)$, $2\leq m\leq n$, if $R$ is of type $\mathsf D_n$ and $\sigma=2\alpha_{n-m+1}+\ldots+2\alpha_{n-2}+\alpha_{n-1}+\alpha_n$.
\end{itemize}
\end{definition}

\begin{example}
\[\begin{picture}(27000,1800)
\put(300,900){
\put(0,0){\usebox{\shortam}}\put(3600,0){\usebox{\edge}}\put(5400,0){\usebox{\shortshortbm}}
}
\put(12000,900){\vector(1,0){3000}}
\put(15900,900){
\put(0,0){\usebox{\susp}}\put(3600,0){\usebox{\wcircle}}\put(3600,0){\usebox{\edge}}\put(5400,0){\usebox{\shortshortbm}}
}
\end{picture}\]
\[\begin{picture}(27000,1800)
\put(300,900){
\put(0,0){\usebox{\shortam}}\put(3600,0){\usebox{\edge}}\put(5400,0){\usebox{\shortshortbprimem}}
}
\put(12000,900){\vector(1,0){3000}}
\put(15900,900){
\put(0,0){\usebox{\susp}}\put(3600,0){\usebox{\wcircle}}\put(3600,0){\usebox{\edge}}\put(5400,0){\usebox{\shortshortbprimem}}
}
\end{picture}\]
\[\begin{picture}(23400,1800)
\put(300,900){
\put(0,0){\usebox{\atwo}}\put(1800,0){\usebox{\shortshortcm}}
}
\put(10200,900){\vector(1,0){3000}}
\put(14100,900){
\put(0,0){\usebox{\edge}}\put(1800,0){\usebox{\wcircle}}\put(1800,0){\usebox{\shortshortcm}}
}
\end{picture}\]
\[\begin{picture}(25800,2400)
\put(300,1200){
\put(0,0){\usebox{\shortam}}\put(3600,0){\usebox{\edge}}\put(5400,0){\usebox{\shortshortdm}}
}
\put(11400,1200){\vector(1,0){3000}}
\put(15300,1200){
\put(0,0){\usebox{\susp}}\put(3600,0){\usebox{\wcircle}}\put(3600,0){\usebox{\edge}}\put(5400,0){\usebox{\shortshortdm}}
}
\end{picture}\]
\end{example}

\begin{definition}\label{def:exl}
Let $\S=(S^p,\sr,\A)$ be a spherical $R$-system. Let $\widetilde{\sr}=\{\sigma_1,\sigma_2\}\subset\sr$ have support included in say $\{\alpha_1,\ldots,\alpha_n\}$, set of simple roots of an irreducible component of $R$. Let $\co'$ be a good distinguished subset of colors such that $\sr/\co'=\widetilde{\sr}$. Then $\widetilde{\sr}$ is called:
\begin{itemize}
\item tail of type $(aa,aa)$ if $R$ is of type $\mathsf E_6$ and $\widetilde{\sr}=\{\alpha_1+\alpha_6,\alpha_3+\alpha_5\}$; 
\item tail of type $(d3,d3)$ if $R$ is of type $\mathsf E_7$ and $\widetilde{\sr}=\{\alpha_2+2\alpha_4+\alpha_5,\alpha_5+2\alpha_6+\alpha_7\}$;
\item tail of type $(d5,d5)$ if $R$ is of type $\mathsf E_8$ and $\widetilde{\sr}=\{2\alpha_1+\alpha_2+2\alpha_3+2\alpha_4+\alpha_5,\alpha_2+\alpha_3+2\alpha_4+2\alpha_5+2\alpha_6\}$;
\item tail of type $(2a,2a)$ if $R$ is of type $\mathsf F_4$ and $\widetilde{\sr}=\{2\alpha_3,2\alpha_4\}$.
\end{itemize}
\end{definition}

\begin{example}
\[\begin{picture}(19800,3000)
\put(300,2100){
\put(0,0){\usebox{\dynkinesix}}\multiput(0,0)(1800,0){2}{\usebox{\wcircle}}\multiput(5400,0)(1800,0){2}{\usebox{\wcircle}}\multiput(0,900)(7200,0){2}{\line(0,-1){600}}\put(0,900){\line(1,0){7200}}\multiput(1800,600)(3600,0){2}{\line(0,-1){300}}\put(1800,600){\line(1,0){3600}}\put(3600,0){\usebox{\atwos}}
}
\put(8400,2100){\vector(1,0){3000}}
\put(12300,2100){
\put(0,0){\usebox{\dynkinesix}}\multiput(0,0)(1800,0){2}{\usebox{\wcircle}}\multiput(5400,0)(1800,0){2}{\usebox{\wcircle}}\multiput(0,900)(7200,0){2}{\line(0,-1){600}}\put(0,900){\line(1,0){7200}}\multiput(1800,600)(3600,0){2}{\line(0,-1){300}}\put(1800,600){\line(1,0){3600}}\put(3600,0){\usebox{\wcircle}}
}
\end{picture}\]
\[\begin{picture}(23400,2400)
\put(300,2100){
\put(0,0){\usebox{\dynkineseven}}\multiput(3600,0)(3600,0){2}{\usebox{\gcircle}}\put(0,0){\usebox{\atwo}}
}
\put(10200,2100){\vector(1,0){3000}}
\put(14100,2100){
\put(0,0){\usebox{\dynkineseven}}\multiput(3600,0)(3600,0){2}{\usebox{\gcircle}}\put(1800,0){\usebox{\wcircle}}
}
\end{picture}\]
\[\begin{picture}(27000,2400)
\put(300,2100){
\put(0,0){\usebox{\dynkineeight}}\multiput(0,0)(7200,0){2}{\usebox{\gcircle}}\put(9000,0){\usebox{\atwo}}
}
\put(12000,2100){\vector(1,0){3000}}
\put(15900,2100){
\put(0,0){\usebox{\dynkineeight}}\multiput(0,0)(7200,0){2}{\usebox{\gcircle}}\put(9000,0){\usebox{\wcircle}}
}
\end{picture}\]
\[\begin{picture}(16200,1800)
\put(300,900){
\put(0,0){\usebox{\dynkinffour}}\multiput(3600,0)(1800,0){2}{\usebox{\aprime}}\put(0,0){\usebox{\atwo}}
}
\put(6600,900){\vector(1,0){3000}}
\put(10500,900){
\put(0,0){\usebox{\dynkinffour}}\multiput(3600,0)(1800,0){2}{\usebox{\aprime}}\put(1800,0){\usebox{\wcircle}}
}
\end{picture}\]
\end{example}

\subsubsection*{Primitive spherical systems}

\begin{definition}\label{def:pss}
A spherical $R$-system is called primitive if it is cuspidal, not decomposable, without positive combs and without tails\footnote{Notice that in the previous papers (\cite{BP05,Br07}) this definition was less restrictive: tails were allowed.} (of type $b(m)$, $2b(m)$, $c(m)$, $d(m)$, $(aa,aa)$, $(d3,d3)$, $(d5,d5)$ or $(2a,2a)$).
\end{definition}

To present them all, we subdivide the primitive spherical systems into clans, which can be characterized as follows:
\begin{description}
\item{\bf clan \fR:} primitive spherical systems with a spherical root of type $aa$ or $2a$ without spherical roots with overlapping supports or with a spherical root of type $d(m)$ $m\geq3$;
\item{\bf clan \fS:} primitive spherical systems with only spherical roots of type $a(m)$ ($m\geq1$) or $b(2)$ (with $S^p=\emptyset$) without spherical roots with overlapping supports;
\item{\bf clan \fT:} primitive spherical systems with a spherical root of type $a(m)$ ($m\geq1$) whose support meets the support of another spherical root.
\end{description}

We give only the Luna diagram of the spherical systems. We use identifications between spherical roots as in Definition~\ref{def:sr}, namely when in a diagram there is a spherical root of type $a(m)$, $2b(m)$ or $d(m)$ without overlapping support then $m$ must be intended as $\geq1$, $\geq1$ or $\geq2$, respectively. In the diagrams of family \fc\ of the clan \fT\ there is always a spherical root of type $c(m)$, here $m$ must be intended as $\geq2$.

Furthermore, notice that in the following list primitive spherical $R$-systems are given up to external automorphism of $R$.

\begin{theorem}\label{thm:pss}
The clans $\fR$, $\fS$ and $\fT$, given below, contain all the primitive spherical systems of rank $>2$.
\end{theorem}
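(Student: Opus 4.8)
The plan is to prove this classification theorem by an exhaustive case-by-case analysis, organized around the combinatorial data that can occur in a primitive spherical system of rank $>2$. First I would set up the reduction: since a primitive system is cuspidal by Definition~\ref{def:pss}, its support is all of $S$, so it suffices to understand which cuspidal spherical $R$-systems can be non-decomposable, comb-free and tail-free. The key technical tool is \emph{localization}: for any subset $S'\subset S$ the localized system $\S'$ is again a spherical $R'$-system, and its spherical roots are precisely those $\sigma\in\sr$ with $\mathrm{supp}\,\sigma\subset S'$. In particular every rank-$2$ localization must appear in the list of Appendix~\ref{app:low}, and this rigidly constrains how two spherical roots with adjacent or overlapping supports can sit together. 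So the first main step is to catalogue, using the rank~$\leq 2$ classification and the axioms (A1)--(S), the admissible ``local configurations'': how a spherical root of each type ($aa$, $2a$, $a(m)$, $b(m)$, $2b(m)$, $c(m)$, $d(m)$, $f$, $g$, $2g$) can meet its neighbours in $S$ and in $\sr$.

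Next I would stratify by the trichotomy defining the clans $\fR$, $\fS$, $\fT$, and check first that it is exhaustive: any primitive $\S$ either has a spherical root of type $aa$, $2a$ or $d(m)$ with $m\ge 3$ (and, being tail-free, such a root of type $d(m)$ cannot be a tail, forcing the overlap/no-overlap alternative that defines clan $\fR$), or it has only roots of type $a(m)$ or $b(2)$ with pairwise non-overlapping supports (clan $\fS$; here the constraint $S^p=\emptyset$ for $b(2)$ comes from compatibility (S) together with cuspidality and non-decomposability), or else it contains a root of type $a(m)$ whose support meets that of another spherical root (clan $\fT$). One must check these three cases cover everything: the remaining combinatorial possibilities (e.g. only roots of type $b$, $2b$, $c$, $f$, $g$, $2g$ with no $a$-type root and no $d(m)$, $m\ge3$) must be shown either to be decomposable, or to contain a tail, or to have a positive comb, hence to be excluded from the primitive list. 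This is where the rank~$\leq 2$ data and the tail definitions (Definitions~\ref{def:tail}, \ref{def:exl}) do the heavy lifting: for instance a terminal segment of type $\mathsf B_m$ or $\mathsf C_m$ carrying the ``obvious'' spherical root is exactly a tail and so is forbidden, which kills long uninteresting chains and leaves only finitely many shapes.

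Within each clan the argument becomes a finite enumeration. For clan $\fR$ one fixes the distinguished root of type $aa$, $2a$ or $d(m)$, uses orthogonality of $\sigma$ with the relevant simple roots (compatibility (S)) and Axiom~($\Sigma1$)/($\Sigma2$) to bound how the rest of $\sr$ attaches, then checks non-decomposability and absence of combs/tails to prune; the outcome is the explicit finite family of diagrams listed under clan $\fR$. Clan $\fS$ is essentially a packing problem: placing pairwise non-adjacent supports of type $a(m)$ (plus possibly one $b(2)$ with empty $S^p$) on a Dynkin diagram so that the result is cuspidal, connected in the relevant sense (non-decomposable), and carries no positive comb; this forces $R$ to be of bounded rank in the exceptional types and yields the listed diagrams. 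Clan $\fT$, the richest, is handled by following a maximal chain of $a(m)$-roots with overlapping/adjacent supports and classifying what can be attached at the ends and in the middle — the subfamilies $\fma,\fmb,\fc,\fd,\ldots$ correspond to the possible ``decorations'' (a $c(m)$-tail-like end made non-tail by an extra root, a $2a$ or $aa$ cap, an exceptional node, etc.).

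The main obstacle, and the bulk of the real work, is the completeness claim proved in \ref{subsec:col} — i.e.\ showing the enumeration misses nothing. The danger is twofold: (i) an admissible local configuration that one forgets to globalize, and (ii) a global configuration that looks primitive but secretly admits a non-obvious decomposition or a non-obvious good distinguished subset $\co'$ with $\sr/\co'$ a single root (a tail) or two roots (a tail of type $(aa,aa)$ etc.). Controlling (ii) requires a systematic way to detect all good distinguished subsets of colors, which in turn leans on the results of Section~\ref{sec:prop} (every distinguished subset is good) or at least on a direct analysis of positive elements of $\mathbb N\co$; there is a mild circularity to navigate, so in practice one proves just enough about quotients by hand for the classification and only afterwards uses the classification to get the clean statement that \emph{all} distinguished subsets are good. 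I would therefore structure the completeness proof as: enumerate candidate diagrams by the clan trichotomy and local constraints; for each surviving candidate exhibit that it is cuspidal, non-decomposable, comb-free and tail-free (the positive assertion); and conversely, for every primitive $\S$, run the localization/pruning argument to land it in one of the listed families — the careful bookkeeping of the pruning at reducible or large-rank support being the step most prone to omissions.
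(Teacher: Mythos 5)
Your plan shares the paper's foundations (reduction to cuspidal systems, control of local configurations via rank~$\leq 2$ localizations, and a finite enumeration of diagrams), and you correctly identify where the real difficulty lies: proving completeness, i.e.\ that every cuspidal, tail-free, comb-free system not on the list is decomposable. But at exactly that point your proposal has a gap: ``run the localization/pruning argument'' and ``careful bookkeeping of the pruning'' is not yet an argument, because you give no mechanism for \emph{producing} the decomposition of an unlisted system, nor for certifying that a candidate really has no tail. The paper's proof is not organized around the clan trichotomy at all (the clans are only a presentation device for the final list); it is organized around a different decomposition. One first defines (weak) color-adjacency and classifies all cuspidal \emph{weakly-color-connected} spherical systems by the recursive extension procedure you describe --- these are the building blocks, and they turn out to be the listed primitives, combs, or systems with a $c(m)$-tail. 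Every cuspidal system is then a gluing of its weakly-color-connected components, and the completeness statement reduces to: any gluing of two or more components is decomposable, has a tail, or has a positive comb. That reduction is made effective by the notions of \emph{erasable} and \emph{quasi-erasable} components (existence of a homogeneous, resp.\ non-empty, $\ast$-distinguished subset of colors vanishing on the other components' spherical roots): two quasi-erasable pieces force decomposability, and Lemma~\ref{lem:A} shows that components meeting a type $\mathsf A$ part of the diagram in the relevant way are automatically (quasi-)erasable. The case-by-case verification of which components are erasable/quasi-erasable is the ``bookkeeping'', but it is bookkeeping over the already-classified finite list of building blocks, not over all conceivable global diagrams.

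Your worry about circularity with Section~\ref{sec:prop} also dissolves in the paper's setup: one never needs to detect \emph{all} good distinguished subsets during the classification. Decomposability is certified by exhibiting specific distinguished subsets (the erasable ones), goodness of quotients of the finitely many primitives is checked directly (Remark~\ref{rem:pgood} and Appendix~\ref{app:qpss}), and the existence of the good subset $\co'$ required in the definition of a tail is reduced by Lemma~\ref{lem:A} to a check on primitive systems. Only afterwards does Section~\ref{sec:prop} use the classification to prove that all distinguished subsets are good. So to complete your proof you would need to replace the clan-by-clan ``pruning'' with something playing the role of the weakly-color-connected decomposition and the (quasi-)erasability criterion, or an equivalent constructive source of decompositions; without it the converse direction of the theorem is not established.
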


\pssclan{\fR}

\pss

$\a^{\faa}(p,p)$ 
\[\diagramaapp\]

$\b^{\faa}(p,p)$ 
\[\diagrambbpp\]

$\c^{\faa}(p,p)$ 
\[\diagramccpp\]

$\d^{\faa}(p,p)$ 
\[\diagramddpp\]

$\e^{\faa}(p,p)$ 
\[
\]

\subsection{Minimal quotients}\label{subsec:qpss}

The distinguished subsets of colors of a given spherical system can easily be determined by looking at the integer matrix of the corresponding Cartan pairing. Often, for a given subset of colors $\co'$ either $\sum_{D\in\co'}D$ is positive (then $\co'$ is distinguished) or there exists some $\sr'\subset\sr$ such that $c(D,\sum_{\sigma\in\sr'} \sigma)\leq0$, not always zero, for all $D\in\co'$ (then $\co'$ is not distinguished).

Recall that the defect $\mathrm{d}(\S)$ of a spherical system $\S=(S^p,\sr,\A)$ with set of colors $\co$ is $\mathrm d(\S)=\mathrm{card}\,\co-\mathrm{card}\,\sr$.

We compute all the minimal quotients of primitive spherical systems and of spherical systems with a primitive positive 1-comb (of rank $>2$). For the clan \fR, these are particularly easy to describe (see below); for the remaining cases, we give all the corresponding diagrams in Appendix~\ref{app:qpss}.

\begin{remark}\label{rem:pgood}
All distinguished subsets of colors of primitive spherical systems or of spherical systems with a primitive positive 1-comb are good.
\end{remark}

Here are some complementary remarks on the structure of some classes of primitive spherical systems with special attention to their quotients.

First, we remark that all the primitive spherical systems have defect $\leq2$.

\subsubsection*{Clan \fR}

All the members of the clan \fR\ but the cases \fR-\ref{aapplusqplusp}, \fR-\ref{con}, \fR-\ref{dcneven}, \fR-\ref{dcnodd} and \fR-\ref{efseven} have defect 0 and a unique minimal distinguished subset of colors, the full set of colors $\co$, which is homogeneous and $\S/\co=(S,\emptyset,\emptyset)$. In the other (above listed) cases the defect is 1 and there are two colors $D_1,D_2$ such that $c(D_1,\sigma)=c(D_2,\sigma)$ for all $\sigma\in\sr$, and there are exactly two minimal distinguished subsets of colors, namely $\co\setminus\{D_1\}$ and $\co\setminus\{D_2\}$, which are homogeneous and such that $S^p/(\co\setminus\{D_i\})=\{\alpha\}$ with $D_i\in\co(\alpha)$. 

\subsubsection*{Clan \fS}

All the members of the family \fx\ have defect 0. 
All the members of the families \fy, \fz, \fu, \fv, \fw\ have defect 1. 

\subsubsection*{Clan \fT}

\paragraph{Family \fma.}

All the members of the clan \fT\ with only spherical roots of type $a(m)$ have the same rank, the same number of colors and the same Cartan pairing of some corresponding members of the clan \fS, therefore such spherical systems have corresponding minimal quotients also. For brevity let us here denote such correspondence by an equivalence symbol $\sim$. In case \fT-\ref{model} we have: 
\begin{itemize}
\item[] $\a^\fm(2p+1)\sim \a^\fy(p,p)$, $\a^\fm(2p)\sim \a^\fy(p-1,p)$, 
\item[] $\d^\fm(2p+2)\sim \ad^\fy(p,p+1)$, $\d^\fm(2p+1)\sim \ad^\fy(p-1,p+1)$, 
\item[] $\e^\fm(8) \sim \d^\fv(7)$, $\e^\fm(7) \sim \a^\fu(6)$, $\e^\fm(6) \sim \a^\fu(5)$;
\end{itemize} 
and furthermore:
\begin{itemize}
\item[] \fT-\ref{acastnplusaone} $\sim$ \fS-\ref{aypplusqplusp} if rank odd, $\sim$ \fS-\ref{aypplusqpluspminusone} if rank even ($\sim$ \fS-\ref{axonepluspplusone} if rank 3),
\item[] \fT-\ref{acastnplusaonee} $\sim$ \fS-\ref{aythreethreeplusapiiid} if rank 7, $\sim$ \fS-\ref{aythreetwoplusapiiid} if rank 6, $\sim$ \fS-\ref{aytwotwoplusapbelow} if rank 5,
\item[] \fT-\ref{acastnplustwocombe} $\sim$ \fS-\ref{aythreetwoplustwocombi} if rank 7, $\sim$ \fS-\ref{aytwotwoplustwocombi} if rank 6, $\sim$ \fS-\ref{aytwooneplustwocombi} if rank 5, 
\item[] \fT-\ref{acastplustwocombd} $\sim$ \fS-\ref{twocombplustwocomb},
\item[] \fT-\ref{acastplusax} $\sim$ \fS-\ref{axplustwocomb},
\item[] \fT-\ref{acastthreeplusaytwotwo} ($\sim$ \fT-\ref{acastnplustwocombe} of rank 6) $\sim$ \fS-\ref{aytwotwoplustwocombi},
\item[] \fT-\ref{acastthreeplusayonetwo} ($\sim$ \fT-\ref{acastnplustwocombe} of rank 5) $\sim$ \fS-\ref{aytwooneplustwocombi},
\item[] \fT-\ref{dsastfour} $\sim \a^\fx(1,1,1)$,
\item[] \fT-\ref{dsnplusaone} ($\sim$ \fT-\ref{acastnplusaone} of rank 3) $\sim$ \fS-\ref{axonepluspplusone},
\item[] \fT-\ref{dsastfourplusaoneplusaone} $\sim$ \fS-\ref{axonepluspplusoneplusqplusone},
\item[] \fT-\ref{dsastfourplusaone} $\sim$ \fS-\ref{axonepluspplusoneone},
\item[] \fT-\ref{dsnplusonecomb} ($\sim$ \fT-\ref{acastplustwocombd}) $\sim$ \fS-\ref{twocombplustwocomb},
\item[] \fT-\ref{dsastfourplusonecomb} ($\sim$ \fT-\ref{acastplusax}) $\sim$ \fS-\ref{axplustwocomb}.
\end{itemize} 

\paragraph{Family \fc.}

The case \fT-\ref{genmodelc} is a generalization of $\c^\fm(n)$, with similar quotients. Let us restrict our attention to the remaining cases.

The case \fT-\ref{aaplusaonepluscm} below (on the right hand side), after ``collapsing'' the spherical root of type $c(m)$, corresponds to the primitive spherical system below (on the left hand side), the minimal quotients of the former are in correspondence with the minimal quotients of the latter:
\[\begin{picture}(4200,2250)(-300,-1350)\put(0,0){\usebox{\dynkincthree}}\multiput(0,0)(3600,0){2}{\usebox{\wcircle}}\multiput(0,-1350)(3600,0){2}{\line(0,1){1050}}\put(0,-1350){\line(1,0){3600}}\put(1800,0){\usebox{\aone}}\put(1800,600){\usebox{\toe}}\end{picture}
\qquad\qquad\qquad
\begin{picture}(12900,2250)
\multiput(300,1350)(1800,0){2}{\usebox{\edge}}
\multiput(300,1350)(3600,0){2}{\circle{600}}
\multiput(300,0)(3600,0){2}{\line(0,1){1050}}
\put(300,0){\line(1,0){3600}}
\put(2100,1350){\usebox{\aone}}
\put(3900,1350){\usebox{\pluscsecondm}}
\put(2100,1950){\usebox{\tobe}}
\end{picture}\]
The same relation holds for the other members of the family \fc\ and the following primitive spherical systems, respectively: \fS-\ref{cxonepluoneplusoneii}, \fS-\ref{bcx} namely $\bc^\fx(2,3)$, \fS-\ref{cu} namely $\c^\fu(5)$, \fS-\ref{axoneoneoneplusapplusaonec}, \fS-\ref{axoneoneoneplusaonec} and \fS-\ref{aytwooneplusaonec}. 

\subsection{General spherical systems}

A general spherical system can be obtained from primitive spherical systems by inducing (see Definition~\ref{def:ind}), composing, joining positive 1-combs and adding tails (see below).

\subsubsection*{Composing}

\begin{proposition}
Let $\S_1=(S^p_1,\sr_1,\A_1)$ and $\S_2=(S^p_2,\sr_2,\A_2)$ be spherical $R$-systems with set of colors $\co_1$ and $\co_2$, respectively. Let $\co''\subset\co_1$ and $\co'\subset\co_2$ be good distinguished subsets with $\S_1/\co''=\S_2/\co'$, say equal to $(S^p_{1,2},\sr_{1,2},\A_{1,2})$, such that
\begin{itemize}
\item $(S^p_{1,2}\setminus S^p_1)\perp (S^p_{1,2}\setminus S^p_2)$ and
\item $(\sr_1\setminus\sr_{1,2})\cap(\sr_2\setminus\sr_{1,2})=\emptyset$.
\end{itemize}
Set
\begin{itemize}
\item $S^p=S^p_1\cap S^p_2$,
\item $\sr=(\sr_1\setminus\sr_{1,2})\cup(\sr_2\setminus\sr_{1,2})\cup(\sr_1\cap\sr_2)$,
\item $\A=\A_1\cup\A_2$ (identifying $\A_1/\co''\subset \A_1$ with $\A_2/\co'\subset\A_2$).   
\end{itemize}
Then 
\begin{itemize}
\item $\S=(S^p,\sr,\A)$ is a well defined spherical $R$-system, with set of colors $\co=\co_1\cup\co_2$ (identifying $\co_1\setminus\co''$ with $\co_2\setminus\co'$), 
\item $\co'$ and $\co''$ decompose $\S$,
\item $\S/\co'=\S_1$ and $\S/\co''=\S_2$.
\end{itemize}
\end{proposition}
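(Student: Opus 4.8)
The plan is a direct verification: the construction is the ``fiber product'' of $\S_1$ and $\S_2$ over their common quotient, so I would first make the amalgamation precise, then check the axioms for $\S$, and finally identify the quotients.

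\emph{Step 1 (identifications and the pairing).} Using that the colors of a quotient $\S_i/\co_\bullet$ identify canonically with $\co_i\setminus\co_\bullet$, and similarly for the $\A$-part, the equality $\S_1/\co''=\S_2/\co'$ provides a bijection between $\co_1\setminus\co''$ and $\co_2\setminus\co'$; writing $\co_{1,2}$ for this common set, inside $\co=\co_1\cup\co_2$ one gets $\co=\co_{1,2}\sqcup\co''\sqcup\co'$ with $\co_1=\co_{1,2}\sqcup\co''$, $\co_2=\co_{1,2}\sqcup\co'$, $\co''=\co\setminus\co_2$, $\co'=\co\setminus\co_1$, and $\A_1\cap\A_2=\A_{1,2}$. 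I would record once and for all that the hypothesis $(\sr_1\setminus\sr_{1,2})\cap(\sr_2\setminus\sr_{1,2})=\emptyset$ is equivalent to $\sr_1\cap\sr_2\subseteq\sr_{1,2}$, that consequently $\sr\subseteq\sr_1\cup\sr_2$, and that $S^p_i\subseteq S^p_{1,2}$, so $S^p_1\setminus S^p_2\subseteq S^p_{1,2}\setminus S^p_2$ and symmetrically. The Cartan pairing on $\co\times\sr$ is defined by: for $\sigma\in\sr$ pick $i$ with $\sigma\in\sr_i$ and set $c(D,\sigma)=c_{\S_i}(D,\sigma)$ for $D\in\co_i$, $c(D,\sigma)=0$ for $D\in\co\setminus\co_i$. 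The point to check is independence of the choice of $i$ when $\sigma\in\sr_1\cap\sr_2$: then $\sigma\in\sr_{1,2}$, hence $c_{\S_1}(D,\sigma)=0$ for $D\in\co''$ and $c_{\S_2}(D,\sigma)=0$ for $D\in\co'$, while on $\co_{1,2}$ both $c_{\S_1}$ and $c_{\S_2}$ restrict to $c_{\S_{1,2}}$; the same computation makes the two recipes for $\A$ agree and shows that $c$ restricts to $c_{\S_1}$ on $\co_1\times\sr_1$ and to $c_{\S_2}$ on $\co_2\times\sr_2$.

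\emph{Step 2 (the axioms for $\S$).} Each axiom concerns a single $\sigma\in\sr$; I fix $\sigma$ and, say, $i=1$ with $\sigma\in\sr_1$. For colors in $\co_1$ the statement is inherited from the corresponding axiom of $\S_1$, so what remains is the ``new'' colors in $\co'$, where $c(-,\sigma)\equiv0$ by construction. This at once gives (A1); it gives ($\Sigma 1$) and ($\Sigma 2$) after remarking that $2\alpha\in\sr$ (resp. $\alpha+\beta\in\sr$ with $\alpha\perp\beta$) forces $\alpha,\beta\in S^{2a}_1$ (resp. $S^a_1$) and the cross terms to vanish; and, together with $S^p=S^p_1\cap S^p_2$ and the fact that $S^p(\sigma)$, $S^{pp}(\sigma)$ depend only on $\sigma$, it gives (S) --- this is where the orthogonality hypothesis $(S^p_{1,2}\setminus S^p_1)\perp(S^p_{1,2}\setminus S^p_2)$ enters, to deduce $S^{pp}(\sigma)\subseteq S^p_2$ in the cases where $\sigma$ or part of its support becomes ``$p$'' in the base. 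Axiom (A3) I would get by decomposing each $D\in\A_1\cup\A_2$ along the $\A(\alpha)$, $\alpha\in S\cap\sr$, via (A3) in $\S_1$, $\S_2$ and $\S_{1,2}$. The one genuinely delicate axiom is (A2): for $\sigma\in S\cap\sr$ one has $\A(\sigma)=\{D\in\A:c(D,\sigma)=1\}=\A_1(\sigma)$ (the extend-by-zero convention is exactly what keeps colors of $\co'$ out of $\A(\sigma)$), so $\operatorname{card}\A(\sigma)=2$; and for the identity $c(D^+_\sigma,\tau)+c(D^-_\sigma,\tau)=\langle\sigma^\vee,\tau\rangle$ the cases $\tau\in\sr_1$ are inherited from $\S_1$, while the cases $\tau\in\sr\setminus\sr_1\subseteq\sr_2\setminus\sr_{1,2}$ require splitting according to how $D^+_\sigma$ and $D^-_\sigma$ distribute among $\co_{1,2}$, $\co''$ and $\co'$, using (A2) in $\S_2$ for the base colors and again the orthogonality hypothesis to force $\langle\sigma^\vee,\tau\rangle=0$ when both colors of $\sigma$ lie in $\co''$. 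I expect this (A2) analysis, together with the consistency checks of Step~1, to be the main obstacle: it is the only place where the two sides genuinely interact and where both hypotheses are really used.

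\emph{Step 3 (the quotients).} Once $\S$ is known to be a spherical $R$-system with colors $\co$, I would show $\co'$ is distinguished in $\S$ by taking a positive element of $\mathbb N_{>0}\co'$ in $\S_2$: it pairs $\geq0$ with every $\sigma\in\sr\cap\sr_2$ and, by extend-by-zero, pairs $0$ with every $\sigma\in\sr\setminus\sr_2\subseteq\sr_1\setminus\sr_{1,2}$; symmetrically for $\co''$. Computing the semigroup of Subsection~\ref{subsec:quo} and using $\sr_1\cap\sr_2\subseteq\sr_{1,2}$ together with the definition of $\sr$ gives $\sr/\co'=\sr_1$, while $S^p/\co'=S^p_1$ and $\A/\co'=\A_1$ follow from the definitions, so $\S/\co'=\S_1$ and symmetrically $\S/\co''=\S_2$. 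The two conditions of Definition~\ref{def:dec} are then immediate: $S^p/\co'\setminus S^p=S^p_1\setminus S^p_2$ and $S^p/\co''\setminus S^p=S^p_2\setminus S^p_1$ are contained in $S^p_{1,2}\setminus S^p_2$ and $S^p_{1,2}\setminus S^p_1$, hence orthogonal by hypothesis, and $\sr\subseteq\sr_1\cup\sr_2=\sr/\co'\cup\sr/\co''$; so $\co'$ and $\co''$ decompose $\S$, and one checks $\S/(\co'\cup\co'')=\S_{1,2}$, which is the asserted fiber product structure.
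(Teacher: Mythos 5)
Your proposal follows essentially the same route as the paper's proof: a direct verification in which the restricted Cartan pairing of $\S$ is defined by gluing the pairings of $\S_1$ and $\S_2$ and extending by zero on the foreign colors, with well-definedness checked on $\sr_{1,2}$ and the two hypotheses entering exactly where you place them (axiom (A2) for simple spherical roots that lose a color in the common quotient, and the orthogonality of $S^p_{1,2}\setminus S^p_1$ and $S^p_{1,2}\setminus S^p_2$ to force the vanishing of the cross pairings). The paper's own argument is in fact terser than yours: it records only two preliminary observations (colors of $\co'$ pair to zero with $\sr_1\setminus\sr_{1,2}$, and $\A_2(\alpha)\cap\co'\neq\emptyset$ for every $\alpha\in S\cap\sr_2\setminus\sr_{1,2}$) together with the case analysis defining the restricted pairing, and leaves the axioms and the quotient identifications as ``analogous''.
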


\begin{proof}
If $D\in\co'$ then $c(D,\sigma)=0$ for all $\sigma\in\sr_1\setminus\sr_{1,2}$. Moreover, for all $\alpha\in S\cap\sr_2\setminus\sr_{1,2}$ one has $\A_2(\alpha)\cap\co'\neq\emptyset$ (and analogously the symmetric implication holds). 

In particular this shows that the restricted Cartan pairing of $\S$ is well defined. Indeed, let $D$ be in $\A(\alpha)$. If $\alpha\in\sr_1\cap\sr_2\subset\sr_{1,2}$ then $\A(\alpha)$ can be identified with $\A_1(\alpha)=\A_2(\alpha)=\A_{1,2}(\alpha)$, thus set $c(D,\sigma)=c_i(D,\sigma)$ if $\sigma\in\sr_i$, $i=1,2$, and notice that $c_1(D,\sigma)=c_2(D,\sigma)$ if $\sigma\in\sr_1\cap\sr_2\subset\sr_{1,2}$. If $\alpha\in\sr_2\setminus\sr_{1,2}$ then, for all $\sigma\in\sr_1\setminus\sr_{1,2}$, $c(D,\sigma)=0$ if $D\in\co'$ or $c(D,\sigma)=\langle\alpha^\vee,\sigma\rangle$ if $D\not\in\co'$ (notice that if $\A_2(\alpha)\subset\co'$ then $\alpha\perp\sigma$ for all $\sigma\in\sr_1\setminus\sr_{1,2}$); finally, $c(D,\sigma)=c_2(D,\sigma)$ for all $\sigma\in\sr\cap\sr_2$.

Analogously, the axioms of spherical system for $\S$ and the rest of the statement follow.
\end{proof}

We say that the spherical system $\S$ as above is obtained by composing the spherical systems $\S_1$ and $\S_2$.

\subsubsection*{Joining positive combs}

\begin{remark}\label{rem:join}
Let $\S=(S^p,\sr,\A)$ be a spherical $R$-system with $D_1,\ldots,D_m$ positive combs with disjoint subsets $S_{D_1},\ldots,S_{D_m}$. Let $S_1,\ldots,S_k$ be such that $S_1\sqcup\ldots\sqcup S_k=S_{D_1}\sqcup\ldots\sqcup S_{D_m}$. The spherical $R$-system $(S^p,\sr,\A')$ where $\A'$ is obtained from $\A$ by replacing the positive combs $D_1,\ldots,D_m$ by other positive combs, say $D'_1,\ldots,D'_k$, with $S_{D'_i}=S_i$, $1\leq i\leq k$, is well defined. In particular, if $D_1,\ldots,D_m$ are positive 1-combs, it is always possible to join them, namely replace them by a unique positive $m$-comb $D'$ with $S_{D'}=S_{D_1}\sqcup\ldots\sqcup S_{D_m}$.
\end{remark}

\subsubsection*{Adding tails}

A color $D$ is called free if there exists at most one spherical root $\sigma$ with $c(D,\sigma)>0$.

\begin{proposition} Let $\S$ be a spherical $R$-system with a tail $\sigma\in\sr$ and notation as in Definition~\ref{def:tail}.
\begin{enumerate}
\item If $\sigma$ is of type $b(m)$, $2b(m)$ or $d(m)$, then there exist (at most) one spherical root $\sigma'$ with $\alpha_{n-m}\in\mathrm{supp}\,\sigma'$ and one free color $D'$ such that $c(D',\sigma)<0$ and $c(D',\sigma')=1$. The spherical root $\sigma'$ is of type $a(k)$, $k\geq1$. 
\item If $\sigma$ is of type $c(m)$, then there exist (at most) one spherical root $\sigma^{(0)}$ with $\alpha_{n-m+1}\in\mathrm{supp}\,\sigma^{(0)}$, one spherical root $\sigma'$ with $\alpha_{n-m}\in\mathrm{supp}\,\sigma'$ and one free color $D'$ such that $c(D',\sigma)<0$ and $c(D',\sigma')=1$. The spherical root $\sigma^{(0)}$ is either of type $a(1)$, of type $aa$, of type $2a$ or of type $a(2)$. In the case $a(2)$ $\sigma^{(0)}=\sigma'$, in the case $2a$ there is no $\sigma'$, otherwise $\alpha_{m-n+1}\not\in\mathrm{supp}\,\sigma'$ and $\sigma'$ is of type $a(k)$, $k\geq1$. 
\end{enumerate}
\end{proposition}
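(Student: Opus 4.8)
The plan is to localize the problem to root subsystems of small rank around the attachment node $\alpha_{n-m}$ (and, for part~2, also around the short endpoint of $\mathrm{supp}\,\sigma$) and then to extract the admissible local configurations from the axioms together with the classification of spherical systems of rank $\leq 2$ (Appendix~\ref{app:low}, \cite{W96}). I would begin with the elementary root computation: the simple roots not orthogonal to $\sigma$ are those in $\mathrm{supp}\,\sigma$ together with $\alpha_{n-m}$, one has $\langle\alpha_{n-m}^\vee,\sigma\rangle\in\{-1,-2\}$, and $\langle\alpha^\vee,\sigma\rangle\geq 0$ for every $\alpha\in\mathrm{supp}\,\sigma$ in the cases $b(m)$, $2b(m)$, $d(m)$, the only exception being the long endpoint $\alpha_n$ of $\mathrm{supp}\,\sigma$ in the $c(m)$ case, where $\langle\alpha_n^\vee,\sigma\rangle=-2$. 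Since $\sigma\notin S$, axiom (A1) forces $c(D,\sigma)\leq 0$ for all $D\in\A$, so by (A2) the two colors $D_\alpha^{\pm}$ attached to a root $\alpha\in S^a\cap\mathrm{supp}\,\sigma$ both pair trivially with $\sigma$, and the $S^{2a}$- and $S^b$-colors attached to $\mathrm{supp}\,\sigma$ have pairing $\tfrac12\langle\alpha^\vee,\sigma\rangle$ resp. $\langle\alpha^\vee,\sigma\rangle$. Hence a color $D$ with $c(D,\sigma)<0$ either lies in $\co(\alpha_{n-m})$ or — only in the $c(m)$ case — in $\co(\alpha_n)$; in particular $\alpha_{n-m}\notin S^p$ whenever such a $D$ at $\alpha_{n-m}$ exists, and (A2) then leaves at most one such $D$, which I name $D'$. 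A separate short argument via ($\Sigma 1$)/(A2) and the rank $\leq 2$ classification rules out the $\alpha_n$-color of the $c(m)$ case being an independent free color with negative pairing on $\sigma$, so that $D'$ is the unique free color with $c(D',\sigma)<0$.

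Next I would locate $\sigma'$. Its support is connected and contains $\alpha_{n-m}$, so it either meets $\mathrm{supp}\,\sigma$ or is contained in the subdiagram on $\{\alpha_1,\dots,\alpha_{n-m}\}$, which is of type $\mathsf A$ in all four cases (the short or trivalent node of the component lies inside $\mathrm{supp}\,\sigma$). According to whether $\alpha_{n-m}$ lies in $S^a$, $S^b$ or $S^{2a}$, the color $D'$ is $D_{\alpha_{n-m}}^{+}$, $D_{\alpha_{n-m}}$ or $D_{2\alpha_{n-m}}$, and the equation $c(D',\sigma')=1$ reads respectively $\sigma'\in S$ (by (A1)), $\langle\alpha_{n-m}^\vee,\sigma'\rangle=1$, or $\langle\alpha_{n-m}^\vee,\sigma'\rangle=2$; combined with ($\Sigma 1$) applied to $\tau=\sigma$ this confines $\sigma'$, when it exists, to a spherical root of type $a(k)$ with $\alpha_{n-m}$ an endpoint of its support, and localizing $\S$ at $\mathrm{supp}\,\sigma\cup\mathrm{supp}\,\sigma'$ and consulting the rank $\leq 2$ classification both excludes the ``inward'' alternative and gives uniqueness of $\sigma'$. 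For part~2 the analogous analysis at the short endpoint $\alpha_{n-m+1}$ of $\mathrm{supp}\,\sigma$ — exploiting, for the good distinguished $\co'$ with $\sr/\co'=\{\sigma\}$, that $c(D,\sigma)=0$ for all $D\in\co'$ while every $\tau\in\sr\setminus\{\sigma\}$ satisfies $c(D,\tau)\neq 0$ for some $D\in\co'$ — produces $\sigma^{(0)}$ and, through the rank $\leq 2$ (and, after adjoining $\sigma^{(0)}$, rank $\leq 3$) localizations at $\mathrm{supp}\,\sigma^{(0)}\cup\mathrm{supp}\,\sigma$ and at $\mathrm{supp}\,\sigma^{(0)}\cup\mathrm{supp}\,\sigma'$, the type list $a(1)$, $aa$, $2a$, $a(2)$ and the stated relations between $\sigma^{(0)}$ and $\sigma'$ (coincidence in the $a(2)$ case, absence of $\sigma'$ in the $2a$ case). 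The freeness of $D'$ then follows by showing that any $\tau\in\sr$ with $c(D',\tau)>0$ must have $\alpha_{n-m}\in\mathrm{supp}\,\tau$ and applying (A2) with the previous step: in the $S^{2a}$ and $S^b$ cases this is immediate from the formula for $c(D',\tau)$ together with ($\Sigma 1$) resp. the list of spherical roots, and in the $S^a$ case a second $\tau\neq\sigma'$ with $c(D',\tau)>0$ would, via (A2) and the rank $2$ localization at $\mathrm{supp}\,\tau\cup\mathrm{supp}\,\sigma'$, contradict the spherical-system axioms.

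I expect the main obstacle to be precisely this organizational work: running the finitely many but numerous rank $\leq 2$ — and, in the $c(m)$ case, rank $\leq 3$ — local configurations so as to obtain uniformly the uniqueness of $\sigma'$, the uniqueness and freeness of $D'$, and the complete type list for $\sigma^{(0)}$, the underlying root computations and axiom verifications themselves being routine.
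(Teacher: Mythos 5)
Your overall strategy (localize, use the rank $\leq2$ classification, and read off the constraints from the axioms) is the right one, and for $m\geq2$ and for $2b(1)$ your outline would go through. But there is a genuine gap, and it sits exactly at the one case the paper singles out as non-evident, namely $b(1)$. There $\sigma=\alpha_n$ is itself a simple root, so your opening premise ``since $\sigma\notin S$, axiom (A1) forces $c(D,\sigma)\leq0$ for all $D\in\A$'' is false: the two colors in $\A(\alpha_n)$ pair with $\sigma$ as $+1$. More seriously, your argument for the freeness of $D'$ in the $S^a$ case does not work. If $\sigma'=\alpha_{n-m}$ is a simple spherical root and a second simple spherical root $\tau$ shares the color $D'$ with it, the localization on $\mathrm{supp}\,\tau\cup\mathrm{supp}\,\sigma'$ does \emph{not} contradict the spherical-system axioms --- two simple spherical roots sharing an element of $\A$ is a perfectly admissible rank~2 configuration (it occurs throughout the classification, e.g.\ in the combs and in the clan $\fS$). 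The contradiction has to come from the tail hypothesis itself: one must show that if such a $\tau=\sigma''$ existed, then (using the condition $c(D^+_{\alpha_n},\cdot)=c(D^-_{\alpha_n},\cdot)$ from Definition~\ref{def:tail} for $m=1$) there would be a color in $\co(\sigma)\cap\co(\sigma'')$, and hence no good distinguished subset $\co'$ with $\sr/\co'=\{\sigma\}$ could exist. You invoke the existence of $\co'$ only in part~2, to locate $\sigma^{(0)}$, but never in part~1, so the freeness claim is unproved precisely where it is delicate.

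A smaller point: your root computation for the $c(m)$ case is off. For $\sigma$ of type $c(m)$ one has $\langle\alpha_n^\vee,\sigma\rangle=2\cdot(-1)+2=0$, not $-2$; the only simple root of $\mathrm{supp}\,\sigma$ not orthogonal to $\sigma$ is $\alpha_{n-m+2}$, with value $+1$, and $\alpha_{n-m+1}$ is orthogonal to $\sigma$ but allowed outside $S^p$ (this is why $\sigma^{(0)}$ enters). This does not break your argument, since you rule the $\alpha_n$-color out anyway, but it should be corrected before the case analysis is run.
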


\begin{proof}
The less evident case is that of type $b(1)$, with $\sigma'$ of type $a(1)$, where to check the last assertion one has to notice that if there exists a simple spherical root $\sigma''\neq\sigma'$ with $c(D',\sigma'')=1$ then there exists a color $D\in\co(\sigma)\cap\co(\sigma'')$: hence the subset $\co'$ of Definition~\ref{def:tail} can not exist.
\end{proof}

\begin{remark} \label{rem:tail}
Let $\S'=((S')^p,\sr',\A')$ be a spherical $R'$-system. 
\begin{enumerate}
\item If $n>m$, let $\alpha_1,\ldots,\alpha_{n-m}$ be the simple roots of an irreducible component of type $\mathsf A$ of $R'$. If there exists a spherical root $\sigma'$ with $\alpha_{n-m}\in\mathrm{supp}\,\sigma'$, assume (i) it is of type $a(k)$, $k\geq1$, and (ii) there exists a free color $D'\in\co(\alpha_{n-m})$ with $c(D',\sigma')=1$ ((ii) follows from (i) if $k>1$). Then is well defined the spherical $R$-system $\S$ with a tail $\sigma$ of type $b(m)$, $2b(m)$ or $d(m)$ (with notation as in Definition~\ref{def:tail}, $\mathrm{supp}\,\sigma=\{\alpha_{n-m+1},\ldots,\alpha_n\}$ and $\sr=\sr'\cup\{\sigma\}$) and with $D'$ unique color such that $c(D',\sigma)<0$. The fact that $\sigma$ is a tail, i.e.\ that there exists a good distinguished subset of colors $\co$ as in Definition~\ref{def:tail} can be checked (see Lemma~\ref{lem:A}) on the primitive spherical systems. 
\item If $n>m$, let $\alpha_1,\ldots,\alpha_{n-m+1}$ be the simple roots of an irreducible component of type $\mathsf A$ of $R'$. Assume there exists a spherical root $\sigma^{(0)}$ with $\alpha_{n-m+1}\in\mathrm{supp}\,\sigma^{(0)}$. If there exists a spherical root $\sigma'$ with $\alpha_{n-m}\in\mathrm{supp}\,\sigma'$, assume (i) it is of type $a(k)$, $k\geq1$, and (ii) there exists a free color $D'\in\co(\alpha_{n-m})$ with $c(D',\sigma')=1$. Then is well defined the spherical $R$-system $\S$ with a spherical root $\sigma$ of type $c(m)$ (with notation as in Definition~\ref{def:tail}, $\mathrm{supp}\,\sigma=\{\alpha_{n-m+1},\ldots,\alpha_n\}$ and $\sr=\sr'\cup\{\sigma\}$) and with $D'$ unique color such that $c(D',\sigma)<0$. The spherical root $\sigma$ is not necessarily a tail, i.e.\ a good distinguished subset of colors $\co$ as in Definition~\ref{def:tail} does not necessarily exist. If $n=m$, there exists no $\sigma'$ (and no $D'$) but the spherical $R$-system with a spherical root $\sigma$ of type $c(m)$ (as above, but with no color $D'$ such that $c(D',\sigma)<0$) is still well defined and $\sigma$ is always a tail.  
\end{enumerate}
\end{remark}

The behavior of tails of exceptional type is similar to that of tails of type $2b(m)$ or $d(m)$.

\begin{proposition} Let $\S$ be a spherical $R$-system with a tail $\widetilde{\sr}\subset\sr$ and notation as above. Then there exist (at most) one spherical root $\sigma'$ not orthogonal to $\widetilde{\sr}$ and one free color $D'$ such that $c(D',\sigma_i)<0$ for some $i=1,2$ and $c(D',\sigma')=1$. The spherical root $\sigma'$ is of type $a(k)$, $1\leq k\leq2$.
\end{proposition}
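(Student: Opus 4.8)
The plan is to argue exactly as in the proof of the analogous Proposition for tails of type $b(m)$, $2b(m)$, $d(m)$, adapting it to the four exceptional tail types of Definition~\ref{def:exl}: $(aa,aa)$ in $\mathsf E_6$, $(d3,d3)$ in $\mathsf E_7$, $(d5,d5)$ in $\mathsf E_8$, $(2a,2a)$ in $\mathsf F_4$. In each case the support of $\widetilde{\sr}=\{\sigma_1,\sigma_2\}$ is a prescribed irreducible component $\{\alpha_1,\ldots,\alpha_n\}$, and by hypothesis there is a good distinguished subset of colors $\co'$ with $\sr/\co'=\widetilde{\sr}$. First I would record which simple roots of the ambient component are \emph{not} in the support but are adjacent to it: in $\mathsf E_6$ this is $\alpha_2$ (neighbour of $\alpha_4$, the one vertex of the $\{\alpha_1,\alpha_3,\alpha_5,\alpha_6\}$-configuration touching the rest), in $\mathsf E_7$ and $\mathsf E_8$ it is the single vertex $\alpha_1$ adjacent to the $D$-type subdiagram carrying $\{\sigma_1,\sigma_2\}$, and in $\mathsf F_4$ it is $\alpha_2$ (neighbour of $\alpha_3$). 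Call this vertex $\alpha_{*}$. Any spherical root $\sigma'$ whose support meets that of $\widetilde{\sr}$ and which is \emph{not} orthogonal to $\widetilde{\sr}$ must involve $\alpha_{*}$ with nonzero coefficient — this uses the rank $\leq 2$ localization classification (Appendix~\ref{app:low}), since two spherical roots with overlapping supports but no common simple root outside each other's support would force a forbidden rank-2 localization, and the scalar products $\langle\alpha_i^\vee,\sigma_j\rangle$ for $\alpha_i$ inside $\mathrm{supp}\,\widetilde{\sr}$ are all $\leq 0$ by the general non-positivity remark.

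Next I would produce the free color $D'$ with $c(D',\sigma_i)<0$. The key mechanism is the same as in the $b(m)$/$d(m)$ case: since $\co'$ is distinguished with $\sr/\co'=\widetilde{\sr}$, for each simple root $\alpha$ in $S^p/\co'\setminus S^p$ we have $\co(\alpha)\subset\co'$, and the combinatorics of the minimal quotient forces the existence of a color paired $-1$ with (at least) one of $\sigma_1,\sigma_2$. Concretely I would pass to the localization at $\mathrm{supp}\,\widetilde{\sr}\cup\{\alpha_{*}\}$ (type $D_{n+1}$ in the $\mathsf E$ cases, type $B_3$ in the $\mathsf F_4$ case) and invoke the explicit list of rank $\leq 2$, and small-rank, spherical systems to see that the only way $\widetilde{\sr}$ can arise as $\sr/\co'$ inside such a localization is via a distinguished subset whose removal leaves a color $D'$ with $c(D',\sigma_1)=-1$ or $c(D',\sigma_2)=-1$ and $c(D',\alpha_{*})=1$, hence $D'\in\co(\alpha_{*})$. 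That $\alpha_{*}\in S\cap\sr$ with a spherical root $\sigma'$ of type $a(k)$, $1\le k\le 2$, supported on a connected $\mathsf A$-string beginning at $\alpha_{*}$ then follows: $\alpha_{*}$ cannot be of type $2a$ or $b$ against the $D$-type (resp.\ $B_3$) neighbour configuration without violating axiom $(\Sigma1)$ or the compatibility axiom $(S)$, and a length-$\geq 3$ $\mathsf A$-string out of $\alpha_{*}$ together with $\widetilde{\sr}$ would again localize to something not in the classification of Theorem~\ref{thm:pss} / Appendix~\ref{app:qpss}. Uniqueness of $\sigma'$ and of $D'$ comes, exactly as in the earlier proposition, from the observation that a second simple spherical root $\sigma''\neq\sigma'$ with $c(D',\sigma'')=1$ would create a color in $\co(\sigma_i)\cap\co(\sigma'')$ for some $i$, obstructing the existence of the distinguished set $\co'$.

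Finally I would assemble these local statements into the global one: ``at most one'' $\sigma'$ and ``at most one'' $D'$ are exactly the uniqueness conclusions above, and the type restriction $a(k)$ with $k\le 2$ is what the localization analysis yields. The main obstacle I expect is the case-checking for $(d5,d5)$ in $\mathsf E_8$: here $\sigma_1,\sigma_2$ each have support of size $5$, the localization at $\mathrm{supp}\,\widetilde{\sr}\cup\{\alpha_1\}$ has rank $\geq 2$ and fairly large rank, so one must be careful that no other spherical root of $\S$ (outside this $D_6$-block) secretly meets the support — this is handled by cuspidality being irrelevant here (we are not assuming $\S$ primitive) but by the overlapping-support constraints forcing any interfering root through $\alpha_1$ only. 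For $(2a,2a)$ in $\mathsf F_4$ one must additionally watch axiom $(\Sigma1)$, which is what pins $\sigma'$ to be a short-root $a(k)$ and rules out $k\geq 3$. All of these are finite verifications against the tables already established, so no genuinely new argument is needed beyond the pattern of the $b(m)$/$2b(m)$/$d(m)$ proposition.
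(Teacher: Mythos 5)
Your overall strategy -- transplant the argument for tails of type $2b(m)$/$d(m)$, reduce to localizations around the tail's support plus the adjacent vertex, and get uniqueness from the observation that a second simple root $\sigma''$ with $c(D',\sigma'')=1$ would produce a color in $\co(\sigma_i)\cap\co(\sigma'')$ obstructing $\co'$ -- is exactly what the paper intends: it offers no proof of this proposition at all beyond the remark that the behavior of exceptional tails ``is similar to that of tails of type $2b(m)$ or $d(m)$.'' So in spirit you are doing the right thing, and arguably more than the paper does.

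However, the concrete data on which your finite verification rests is wrong in three of the four cases, and this is not cosmetic because the entire proof is that finite verification. The simple root adjacent to $\mathrm{supp}\,\widetilde{\sr}$ (your $\alpha_*$, the root carrying the free color $D'$) is $\alpha_4$ for $(aa,aa)$ in $\mathsf E_6$ (not $\alpha_2$, which is not adjacent to $\{\alpha_1,\alpha_3,\alpha_5,\alpha_6\}$ at all), $\alpha_3$ for $(d3,d3)$ in $\mathsf E_7$ (not $\alpha_1$, which is two steps away from the support), and $\alpha_7$ for $(d5,d5)$ in $\mathsf E_8$ (not $\alpha_1$, which actually lies \emph{inside} $\mathrm{supp}\,\sigma_1$); only the $\mathsf F_4$ case ($\alpha_2$) is correct. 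One sees this directly from the paper's example diagrams following Definition~\ref{def:exl}: the extra spherical root $\sigma'$ is $\alpha_2+\alpha_4$ in $\mathsf E_6$, $\alpha_1+\alpha_3$ in $\mathsf E_7$, $\alpha_7+\alpha_8$ in $\mathsf E_8$. Consequently your claim that any $\sigma'$ non-orthogonal to $\widetilde{\sr}$ ``must involve $\alpha_*$'' is false as stated (it is false even for the correct $\alpha_*$ when $k$ could a priori be $1$ versus $2$, and badly false for your $\alpha_*$), and the localizations you propose to check are mislabelled (e.g.\ $\mathrm{supp}\,\widetilde{\sr}\cup\{\alpha_4\}$ in $\mathsf E_6$ is of type $\mathsf A_5$, and $\mathrm{supp}\,\widetilde{\sr}\cup\{\alpha_7\}$ in $\mathsf E_8$ is of type $\mathsf E_7$, not $\mathsf D_{n+1}$). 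To repair the proof you need to redo the case analysis with the correct adjacent vertices $\alpha_4,\alpha_3,\alpha_7,\alpha_2$; with those in hand the rest of your plan (non-positivity of pairings, axioms $(\Sigma1)$ and $(\mathrm S)$ excluding types $2a$ and $b$, the rank $\leq2$ tables, and the uniqueness argument) goes through as you describe.
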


\begin{remark} 
Let $\S'=((S')^p,\sr',\A')$ be a spherical $R'$-system. 
Let $\beta_1,\beta_2$ be the simple roots of an irreducible component of type $\mathsf A$ of $R'$. If there exists a spherical root $\sigma'$ with $\beta_2\in\mathrm{supp}\,\sigma'$, assume (i) it is of type $a(k)$, $1\leq k\leq2$, and (ii) there exists a free color $D'\in\co(\beta_2)$ with $c(D',\sigma')=1$ ((ii) follows from (i) if $k>1$). Then is well defined the spherical $R$-system $\S$ with a tail $\widetilde{\sr}$ of type $(aa,aa)$, $(d3,d3)$, $(d5,d5)$ or $(2a,2a)$ (with notation as in Definition~\ref{def:exl}, $\sr=\sr'\cup\widetilde{\sr}$) and with $D'$ unique color such that $c(D',\sigma_i)<0$, for some $i=1,2$. 
\end{remark}

We say that the spherical system $\S$ as above is obtained by adding a tail to the spherical system $\S'$.

\subsection{Color-adjacent spherical roots and gluing}\label{subsec:col}

To prove here Theorems \ref{thm:pss} and \ref{thm:ppc} we introduce several technical combinatorial notions. Although they have no clear geometric or group theoretic counterpart on the side of wonderful varieties (and other slightly different notions could have been used), they help understanding and describing more explicitly the combinatorial structure of a general spherical system.

\subsubsection*{Color-connected spherical systems}

For simplicity, let us assume for the moment that 
\begin{equation}\label{eqn:nsc} 
\parbox{10.6cm}{if $\co(\alpha)\neq\emptyset$ then $\alpha$ lies in the support of (at most) one spherical root.}
\end{equation}

Two spherical roots $\sigma_1,\sigma_2$ will be called color-adjacent if, for all colors $D\in\co(\alpha)$ for some $\alpha\in\mathrm{supp}\,\sigma_i$, $c(D,\sigma_j)\neq0$, for $i\neq j$. In Appendix~\ref{app:low} one can find all the possible pairs of such color-adjacent spherical roots. Two spherical roots will be called color-connected if satisfy the above relation extended by transitivity. For short, a spherical system will be called color-connected if spherical roots are pairwise color-connected.

\begin{lemma}
A cuspidal color-connected spherical system (under the condition \eqref{eqn:nsc} above) of rank $>2$ 
\begin{itemize}
\item[-] is primitive and is either a member of the clan \fR, or a member of the families $\fx$, $\fy$, $\fz$, $\fu$, $\fv$, $\fw$ of the clan \fS\ 
\item[-] or is an $n$-comb.
\end{itemize}
\end{lemma}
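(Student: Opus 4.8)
The plan is to classify cuspidal color-connected spherical systems (under condition \eqref{eqn:nsc}) by first understanding the ``color-adjacency graph'' and then doing a case analysis driven by the types of the spherical roots that appear. First I would recall from Appendix~\ref{app:low} the complete list of pairs of color-adjacent spherical roots of rank $2$; this is the finite combinatorial input. Since the system is cuspidal, every simple root lies in the support of some spherical root, and since it is color-connected, the spherical roots are arranged so that consecutive ones (in the adjacency relation) overlap in a controlled way on the Dynkin diagram. The key structural observation is that, under \eqref{eqn:nsc}, each simple root $\alpha$ with $\co(\alpha)\neq\emptyset$ belongs to a unique spherical root's support, so the ``linking'' between two color-adjacent spherical roots $\sigma_1,\sigma_2$ happens through a color $D$ attached to a simple root in one support which pairs nontrivially with the other; inspecting the rank-$2$ list tells us exactly which local configurations are possible.

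The main body of the argument is a case analysis on the types of spherical roots present. I would split into: (a) the presence of a spherical root of type $aa$, $2a$, or $d(m)$ with $m\geq 3$ — here I claim color-connectedness forces the whole system into the clan \fR, because such a root can only be color-adjacent to roots of very restricted type and the connectedness propagates this constraint; (b) only spherical roots of type $a(m)$ ($m\geq 1$) or $b(2)$ appear, with $S^p=\emptyset$, and supports overlap — this is exactly the defining condition of the families $\fx,\fy,\fz,\fu,\fv,\fw$ of clan \fS, and one must check that the combinatorics of how $a$-type roots can be color-adjacent (again read off from the rank-$2$ list) produces precisely those diagram shapes; (c) the remaining possibility, where all spherical roots are simple (type $a(1)$) and are linked by a single common color — this is the definition of an $n$-comb. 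Throughout, cuspidality is used to ensure the diagram is ``filled up'' (no isolated simple roots), and one checks at each step that the resulting system is not decomposable and has no tails, hence is primitive (or is an $n$-comb in case (c)).

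For the verification that the listed systems are exhaustive, I would argue by induction on the rank: remove a ``terminal'' spherical root $\sigma$ (one that is color-adjacent to only one other spherical root) by localizing, obtaining a smaller cuspidal color-connected system on the complementary set of simple roots, apply the inductive hypothesis, and then enumerate the ways $\sigma$ can be re-attached consistently with the rank-$2$ color-adjacency data and with Axioms~(A1)--(S). The absence of proportional spherical roots and the compatibility axiom (S) sharply limit these extensions; in particular the $S^p=\emptyset$ condition in case (b) is forced because a nonempty $S^p$ adjacent to overlapping $a$-type roots would violate compatibility or produce a tail. One must also handle the exceptional root systems ($\mathsf E_6,\mathsf E_7,\mathsf E_8,\mathsf F_4,\mathsf G_2$) separately, matching against the explicit small-rank members of each clan and family.

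The hard part will be case (b): showing that color-connectedness among $a(m)$-type roots (possibly with one $b(2)$) yields \emph{exactly} the six families and nothing more, and conversely that each diagram drawn in those families is genuinely cuspidal, color-connected, not decomposable, and tail-free. This requires carefully tracking which pairs of $a$-type roots can share a color versus merely overlap in support, distinguishing the ``two-comb-like'' linkings from the simple adjacencies, and organizing the resulting branching/chaining patterns of the Dynkin diagram — essentially reconstructing the combinatorial content of the family diagrams from the local rank-$2$ data. The other cases are comparatively rigid: in case (a) the non-$a$ spherical root is so constrained by the rank-$2$ list that only the clan-\fR\ shapes survive, and case (c) is essentially immediate from the definition of a comb.
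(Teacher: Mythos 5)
Your overall strategy---reconstruct the system recursively from the rank-$2$ color-adjacency data, splitting according to the types of spherical roots present---is exactly the paper's. Your case (a) matches: starting from a root of type $aa$, $2a$ or $d(m)$, the rank-$2$ list so constrains the possible color-adjacent neighbours that only clan-$\fR$ shapes survive, and the paper carries this out precisely as you describe. The residual hard case is also handled by the recursion you propose (the paper notes the hand computation ``is very long'' and supplements it with a computer enumeration up to rank $9$ followed by an induction on the rank for the infinite families).

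There is, however, a concrete error in your division into cases (b) and (c) that would derail the execution. You characterize case (b) by ``supports overlap'': this contradicts the definition of clan $\fS$, which explicitly requires \emph{no} overlapping supports (overlapping supports is the defining feature of clan $\fT$, excluded from this lemma), and it is also incompatible with condition \eqref{eqn:nsc} together with axiom (S), under which two $a$-type spherical roots cannot share a simple root of their supports. More importantly, the two alternatives of the lemma's dichotomy are not separated by ``roots of type $a(m)$, $m\geq1$'' versus ``all roots of type $a(1)$'': the paper's proof restricts, for this entire part, to systems in which \emph{every} spherical root is of type $a(1)$, and both the $n$-combs and the families $\fx,\ldots,\fw$ arise there. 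What distinguishes them is the pattern in which the two colors of each $\A(\alpha)$ are shared among the simple spherical roots (governed by axioms (A1) and (A2)), not the presence of longer $a$-type roots; the members of clan $\fS$ containing $a(m)$ roots with $m\geq2$ belong to the families $\fp$ and $\fq$, which are not color-connected and hence correctly absent from this lemma. Executing your case (b) as written, you would search for configurations that cannot occur and miss the actual source of the six families. The repair is to merge (b) and (c) into a single analysis of systems with only simple spherical roots, where color-adjacency of $\alpha_1,\alpha_2$ means $\co(\alpha_1)\cap\co(\alpha_2)\neq\emptyset$, and to classify the resulting color-sharing patterns---this is the long enumeration that constitutes the real content of the lemma.
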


\begin{proof}
It is enough to proceed recursively: assume we have such a color-connected spherical system, then see if it can be the localization of another color-connected spherical system with a spherical root more.

Let us start with a spherical root of type $aa$, it can be color-adjacent to a spherical root of type either $aa$, $2a$, $a(p)$ ($p\geq1$) or $b(2)$. If it is color-adjacent to a spherical root of type $a(1)$, then the latter cannot be color-adjacent to another spherical root of same type $a(1)$ by the axiom (A1). Then we can go on recursively but no other types of spherical roots can occur and we can find all the cuspidal color-connected spherical systems with a spherical root of type $aa$: the rank $>2$ cases are members of the clan \fR.

Let us start with a spherical root of type $d(m)$. It can be color-connected to a spherical root of type either: $d(3)$, $2a$, $a(1)$, $a(3)$, $c(p)$, $2b(2)$ (if $m=3$), $2a$, $a(1)$, $a(5)$ (if $m=4$), $d(5)$, $2a$, $a(1)$ (if $m=5$) or $2a$, $a(1)$ (if $m>5$). Again a spherical root of type $a(1)$ here cannot be color-adjacent to another spherical root of same type $a(1)$. We get a list of rank $>2$ cases which is included in the clan \fR.

Let us start with a spherical root of type $2a$. It can be color-connected to a spherical root of type $2a$, $a(1)$, $2b(p)$ ($p\geq2$). Roots of type $aa$ and $d(p)$ have already been considered. Similarly, we get a list of rank $>2$ cases which is included in the clan \fR.

We get no new cases by starting with a spherical root of type $2b(m)$.

If we start with spherical roots of type different from those considered above and different from $a(1)$ we get only rank 1 or rank 2 cases.

Let us restrict to the cases with only spherical roots of type $a(1)$. Two such roots $\alpha_1,\alpha_2$ are color-adjacent if and only if $\co(\alpha_1)\cap\co(\alpha_2)\neq\emptyset$. Here it is possible to proceed recursively as above (applying the axioms (A1) and (A2)) and get all the cases, but they are many and the procedure is very long. There are the combs and all the cases listed in the families $\fx$, $\fy$, $\fz$, $\fu$, $\fv$, $\fw$ of the clan \fS. Alternatively, one can list all the cases by computer up to rank 9. A color-connected spherical system with $\sr=S$ of rank 9 is $\bd^\fx(4,5)$, $\a^\fy(4,5)$, $\ab^\fy(4,5)$, $\ad^\fy(4,5)$, $\bd^\fy(4,5)$ or a $9\comb$. Then it is possible to start from there to prove by induction that the only color-connected spherical systems with $\sr=S$ and rank $\geq9$ (that are not combs) are:
\begin{itemize}
\item $\bd^\fx(p-1,p)$, $\a^\fy(p-1,p)$, $\ab^\fy(p-1,p)$, $\ad^\fy(p-1,p)$, $\bd^\fy(p-1,p)$ (with odd rank $2p-1$), and
\item $\bd^\fx(p,p)$, $\a^\fy(p,p)$, $\ab^\fy(p,p)$, $\b^\fy(p,p)$, $\d^\fy(p,p)$, $\ad^\fy(p-1,p+1)$, $\a^\fy(2p)$ (with even rank $2p$).
\end{itemize}
\end{proof}

\subsubsection*{Plugs}

Let us consider the following spherical systems:
\begin{equation}\label{eqn:plug}
\parbox{10.6cm}{
\[\begin{picture}(9600,2250)(-300,-900)
\multiput(0,0)(7200,0){2}{\usebox{\edge}}
\multiput(0,0)(9000,0){2}{\usebox{\aone}}
\put(1800,0){\usebox{\mediumam}}
\put(0,600){\usebox{\toe}}
\put(9000,600){\usebox{\tow}}
\multiput(0,1350)(9000,0){2}{\line(0,-1){450}}
\put(0,1350){\line(1,0){9000}}
\end{picture}
\qquad\qquad\qquad
\begin{picture}(9600,2250)(-300,-900)
\put(0,0){\usebox{\edge}}
\put(7200,0){\usebox{\rightbiedge}}
\multiput(0,0)(9000,0){2}{\usebox{\aone}}
\put(1800,0){\usebox{\mediumam}}
\put(0,600){\usebox{\toe}}
\put(9000,600){\usebox{\tow}}
\multiput(0,1350)(9000,0){2}{\line(0,-1){450}}
\put(0,1350){\line(1,0){9000}}
\end{picture}
\]
\[
\begin{picture}(9600,2250)(-300,-900)
\put(0,0){\usebox{\edge}}
\put(7200,0){\usebox{\leftbiedge}}
\multiput(0,0)(9000,0){2}{\usebox{\aone}}
\put(1800,0){\usebox{\mediumam}}
\put(0,600){\usebox{\toe}}
\put(9000,600){\usebox{\tow}}
\multiput(0,1350)(9000,0){2}{\line(0,-1){450}}
\put(0,1350){\line(1,0){9000}}
\end{picture}
\qquad\qquad
\begin{picture}(4200,2250)(-300,-900)
\put(0,0){\usebox{\dynkincthree}}
\multiput(0,0)(1800,0){3}{\usebox{\aone}}
\multiput(0,600)(1800,0){2}{\usebox{\toe}}
\put(3600,600){\usebox{\tow}}
\multiput(0,1350)(3600,0){2}{\line(0,-1){450}}
\put(0,1350){\line(1,0){3600}}
\end{picture}
\qquad\qquad
\begin{picture}(6000,2250)(-300,-900)
\put(0,0){\usebox{\dynkinf}}
\multiput(0,0)(5400,0){2}{\usebox{\aone}}
\put(1800,0){\usebox{\bsecondtwo}}
\put(0,600){\usebox{\toe}}
\put(5400,600){\usebox{\tow}}
\multiput(0,1350)(5400,0){2}{\line(0,-1){450}}
\put(0,1350){\line(1,0){5400}}
\end{picture}
\]
}
\end{equation}

We say that two spherical roots $\sigma_1,\sigma_2$ of a spherical $R$-system $\S$ lie in a plug if there exists $S'\subset S$ with $\mathrm{supp}\,\sigma_1\cup\mathrm{supp}\,\sigma_2\subset S'$ such that the corresponding localization equals one of the above systems.

We easily obtain the following

\begin{lemma}\label{lem:plug}
A cuspidal spherical system (under the condition \eqref{eqn:nsc}) of rank $>2$ such that every pair of not color-adjacent spherical roots lies in a plug as above is primitive and is a member of the family \fp\ of the clan \fS.
\end{lemma}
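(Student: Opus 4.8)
The plan is to combine this statement with the preceding lemma (which already classifies the color-connected cuspidal systems of rank $>2$) and then to exploit the very rigid shape of the five plugs in \eqref{eqn:plug}; essentially no induction is needed. So we may assume $\S$ is not color-connected. The first step is to show that in fact \emph{no} two spherical roots of $\S$ are color-adjacent. Suppose $\sigma_1,\sigma_2\in\sr$ were color-adjacent and let $C\subset\sr$ be their color-connected component, so $\mathrm{card}\,C\geq2$. If $\mathrm{card}\,C\geq3$, the localization of $\S$ to $\bigcup_{\tau\in C}\mathrm{supp}\,\tau$ is cuspidal, color-connected and of rank $>2$, hence by the preceding lemma it belongs to the clan $\fR$, to one of the families $\fx,\fy,\fz,\fu,\fv,\fw$, or is an $n$-comb; running through this short list against \eqref{eqn:plug} one exhibits in each case (for the $n$-combs a slightly longer argument, using that every pair of a comb is color-adjacent, so that the third spherical root of any plug meeting the comb lies outside it and the comb is surrounded by incompatible plug structures) a pair of spherical roots of $\S$ which is neither color-adjacent nor contained in a plug, contradicting the hypothesis. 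Hence $\mathrm{card}\,C=2$; but then, $\S$ being cuspidal of rank $>2$, there is a third spherical root $\sigma_3\notin C$, and $\{\sigma_1,\sigma_3\}$ lies in a plug $S'$; since each system in \eqref{eqn:plug} has exactly three spherical roots and no color-adjacent pair, the localization at $S'$ would contain a spherical root of $C$ distinct from $\sigma_1$ and not color-adjacent to $\sigma_1$ --- impossible. So no pair of spherical roots of $\S$ is color-adjacent; in particular \emph{every} pair of spherical roots of $\S$ lies in a plug, every spherical root of $\S$ has type $a(1)$, $a(p)$ ($p\geq2$) or $b(2)$, and condition \eqref{eqn:nsc} holds automatically.

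The second step pins down $\S$. In each of the five plugs the support is a path-shaped diagram whose three spherical roots fill it up: a ``hub'' (of type $a(p)$ or $b(2)$, or $a(1)$ in the $\mathsf C_3$ plug) in the middle and an $a(1)$-``leaf'' on each side. Using this I would first show $\mathrm{rank}\,\S=3$: were there a fourth spherical root $\sigma_4$, then $\{\sigma_1,\sigma_2\}$ and $\{\sigma_1,\sigma_4\}$ would lie in plugs at path-shaped supports $S'$ and $S''$, and comparing the position of $\mathrm{supp}\,\sigma_1$ inside $S'$ and inside $S''$ forces some spherical root to be simultaneously a left neighbour and a right neighbour of $\mathrm{supp}\,\sigma_1$, which is absurd. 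So $\sr=\{\sigma_1,\sigma_2,\sigma_3\}$; the plug containing $\{\sigma_1,\sigma_2\}$ has exactly three spherical roots, so its third one is $\sigma_3$, whence its support contains $\mathrm{supp}(\sr)=S$ and therefore equals $S$ by cuspidality; hence $\S$ coincides with that plug. Thus $\S$ is $\fS$-\ref{axonepluspplusone}, $\fS$-\ref{bxonepluspplusone}, $\fS$-\ref{cxonepluspplusone}, $\fS$-\ref{cxonepluoneplusoneii} or $\fS$-\ref{fxoneplustwoplusone}, all of which are members of family $\fp$; cuspidality is the hypothesis, and indecomposability and the absence of positive combs and of tails are immediate from these five diagrams, so $\S$ is primitive and lies in family $\fp$.

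The real work, and the main obstacle, is the first step: the case-by-case verification --- going through the rank-$\leq2$ classification of Appendix~\ref{app:low} and the five plugs \eqref{eqn:plug} --- that the members of the clan $\fR$, of the families $\fx,\dots,\fw$, and the combs all carry a pair of spherical roots which is neither color-adjacent nor inside a plug, so that a cuspidal system of rank $>2$ satisfying the plug condition cannot have a color-adjacent pair at all. Once that is settled, the second step is the short direct argument on the linear shape of the plugs sketched above, which is why the lemma can be obtained ``easily''.
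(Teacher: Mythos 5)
Your argument hinges on the assertion that the five systems in \eqref{eqn:plug} contain no color-adjacent pair of spherical roots, and this is false. In each plug the two extremal spherical roots of type $a(1)$, say $\alpha$ and $\beta$, share a color $D$ (the line joining their circles in the diagram), so $c(D,\beta)=1$, while the second color $D'\in\A(\alpha)$ satisfies $c(D',\beta)=\langle\alpha^\vee,\beta\rangle-1=-1\neq0$; by symmetry every color supported on $\alpha$ or on $\beta$ pairs nontrivially with the other root, so $\alpha$ and $\beta$ \emph{are} color-adjacent (two roots of type $a(1)$ sharing one color appear explicitly among the rank-2 color-adjacent configurations in Appendix~\ref{app:low}). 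Consequently Step 1 collapses: its conclusion, that no two spherical roots of $\S$ are color-adjacent, already fails for the plugs themselves, which trivially satisfy the hypothesis of the lemma. The $\mathrm{card}\,C=2$ branch is also logically incomplete even granting the premise, because membership in $C$ only gives color-connectedness, not color-adjacency, so finding in the plug a root of $C$ not color-adjacent to $\sigma_1$ would not be a contradiction; and nothing forces the support of $\sigma_2$ to be contained in $S'$ in the first place.

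Step 2's conclusion is then also too strong. The family $\fp$ is much larger than the five plugs, and the hypothesis of the lemma is satisfied by more of its members: the fifth system of the family (an $a(1)$, an $a(m)$, and two further $a(1)$'s at the fork of a diagram of type $\mathsf D$, the three $a(1)$'s sharing a color) has rank $4$; its three roots of type $a(1)$ are pairwise color-adjacent, and each of the remaining three pairs lies in the plug obtained by localizing to the path through one branch of the fork. So the claim that $\mathrm{rank}\,\S=3$ and that $\S$ is one of the five systems of \eqref{eqn:plug} is false. Note that the paper gives no written proof of this lemma: it is obtained by continuing the recursive procedure of the preceding lemma, now allowing a plug as an additional way of linking two spherical roots (this is exactly how it is invoked in Step~2 of the proof of the main lemma of \ref{subsec:col}, where it is said to yield \emph{all} of family $\fp$), and the statement must be read in that context --- as written it would also be satisfied vacuously by an $n$-comb, whose pairs are all color-adjacent. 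A correct argument therefore has to describe how color-connected blocks of roots of type $a(1)$ glue onto the middle root of a plug, rather than ruling such blocks out.
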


\subsubsection*{Weakly-color-connected spherical systems}

Let us abandon the condition \eqref{eqn:nsc}. Notice that the following spherical system has the same rank, the same defect and the same Cartan pairing of the first spherical system of \eqref{eqn:plug}: it can thus be considered as a generalized plug, as well.
\begin{equation}\label{eqn:pl2}
\parbox{10.6cm}{
\[\begin{picture}(9000,3000)(-300,-1500)
\put(0,0){\usebox{\edge}}
\put(0,0){\usebox{\aone}}
\put(0,600){\usebox{\toe}}
\put(1800,0){\usebox{\amne}}
\put(1800,0){\usebox{\amse}}
\end{picture}\]
}
\end{equation}

Generalizing the notion of plugs, we define weak plugs: we say that two spherical roots $\sigma_1,\sigma_2$ of a spherical $R$-system $\S$ lie in a weak plug if there exists $S'\subset S$ with $\mathrm{supp}\,\sigma_1\cup\mathrm{supp}\,\sigma_2\subset S'$ such that the corresponding localization satisfies the following: 
\begin{equation}\label{eqn:wpl}
\parbox{10.6cm}{
$\sr'=\{\sigma_1,\sigma_2,\sigma_3\}$ and $\sigma_1,\sigma_2$ are color-connected or have overlapping support and there are two colors $D_1,D_2$ such that $D_i\in\co(\alpha_i)$ with $\alpha_i\in\mathrm{supp}\,\sigma_3$ and $c(D_i,\sigma_i)\neq0$, for $i=1,2$.
}
\end{equation}

Plugs are weak plugs. A particular weak plug is the following:
\[
\begin{picture}(11100,2250)(-300,-900)
\put(0,0){\usebox{\edge}}
\multiput(0,0)(1800,0){2}{\usebox{\aone}}
\put(1800,0){\usebox{\shortcm}}
\put(0,600){\usebox{\tobe}}
\end{picture}
\]

We say that two spherical roots are weakly-color-adjacent if are color-adjacent or have overlapping support or lie in a weak plug; we say that two spherical roots are weakly-color-connected if satisfy the same relation extended by transitivity; for short, we say that a spherical system is weakly-color-connected if its spherical roots are pairwise weakly-color-connected.

\begin{lemma}
A cuspidal weakly-color-connected spherical system 
\begin{itemize}
\item[-] is primitive and is a member of the clan \fR, of the clan \fS\ (but not of the family \fq), or of the clan \fT,
\item[-] or has a tail of type $c(m)$,
\item[-] or has a positive $n$-comb: if we here assume in addition that $n=1$, then the positive 1-comb is primitive and the spherical system is a member of the family \fp.
\end{itemize}
\end{lemma}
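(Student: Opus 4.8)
The plan is to argue by a recursive localization analysis, exactly in the spirit of the proofs of the preceding two lemmas, but now with the notion of weak-color-adjacency replacing that of color-adjacency and with the condition \eqref{eqn:nsc} dropped. First I would reduce to the situation where \eqref{eqn:nsc} fails at some simple root $\alpha$, since under \eqref{eqn:nsc} the statement already follows by combining the previous lemma on weakly-color-connected systems with Lemma~\ref{lem:plug}: in that case either every pair of spherical roots is color-adjacent (and we land in the clan \fR\ or the families \fx,\fy,\fz,\fu,\fv,\fw, or get an $n$-comb), or some pair is only weakly-color-adjacent via a plug and Lemma~\ref{lem:plug} places us in family \fp. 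So I would focus on a simple root $\alpha$ with $\co(\alpha)\neq\emptyset$ lying in the support of two spherical roots: by the rank-$2$ classification (Appendix~\ref{app:low}) and axioms (A1), (A2), such overlaps are very constrained, and the only configurations that can persist are those appearing in the clan \fT\ (the families \fma, \fmb, \fc) or the tail configuration of type $c(m)$, or the generalized-plug configuration \eqref{eqn:pl2}.

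Next I would carry out the recursion proper. Starting from one spherical root, I enumerate the spherical roots weakly-color-adjacent to it, using: (i) the list of color-adjacent pairs from Appendix~\ref{app:low}; (ii) the list of admissible overlapping supports, again from the rank-$2$ classification; and (iii) the list of weak plugs, i.e.\ the systems \eqref{eqn:plug}, \eqref{eqn:pl2}, and the $c(m)$-containing weak plug displayed after \eqref{eqn:wpl}. The key book-keeping step is that a spherical root of type $a(1)$ still cannot be color-adjacent to another of type $a(1)$ if their colors coincide without this forcing the comb structure, by (A1)--(A2); and a spherical root of type $c(m)$ attached through its long end behaves, for the purposes of this recursion, like the tail situation of Remark~\ref{rem:tail}(2), so it either closes off as a tail of type $c(m)$ or forces one of the family-\fc\ configurations. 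Grinding through the case distinctions — spherical root of type $aa$, $2a$, $d(m)$, $2b(m)$, $a(m)$, $b(2)$, $c(m)$ as the ``seed'' — reproduces exactly the diagrams of clans \fR, \fS$\setminus$\fq, and \fT, plus the two exceptional exits (a $c(m)$ tail, or a positive $n$-comb, which when $n=1$ is primitive and forces family \fp\ by Lemma~\ref{lem:plug} together with the observation that \eqref{eqn:pl2} is a generalized plug with the same Cartan pairing as the first member of \eqref{eqn:plug}).

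Finally I would check that the family \fq\ is genuinely excluded and that the ``primitive'' conclusion is legitimate. Family \fq\ is excluded because each of its members contains a pair of spherical roots that is neither color-adjacent, nor of overlapping support, nor inside a weak plug — one verifies this by inspecting the two ``separated'' $a(m)$-type pieces in the \fq-diagrams, whose colors never meet the required $\sigma_3$-configuration of \eqref{eqn:wpl}; hence a \fq-system is not weakly-color-connected. Primitivity (cuspidality is built into the hypothesis once we note weakly-color-connected cuspidal systems have $\mathrm{supp}(\sr)=S$; non-decomposability follows because a decomposition as in Definition~\ref{def:dec} would split $\sr$ into two parts neither of which reaches across, contradicting weak-color-connectedness; absence of tails and of positive combs is exactly the content of the three alternative bullets) then follows, giving the trichotomy as stated.

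The hard part will be the exhaustiveness of the recursive enumeration: one must be certain that the enumeration of weakly-color-adjacent partners at each step, and the way partial configurations can be glued, is complete, and in particular that no new infinite family beyond those listed sneaks in once \eqref{eqn:nsc} is dropped. This is where the bulk of the (mechanical but lengthy) casework lives, and where appealing carefully to the rank-$\leq 2$ classification of Appendix~\ref{app:low} and to the already-proved previous lemmas — rather than redoing everything from scratch — keeps the argument manageable; the family-\fc\ and $c(m)$-tail branches, which interact with Remark~\ref{rem:tail}(2), are the subtlest to get exactly right.
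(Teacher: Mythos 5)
Your overall strategy --- a recursive, localization-based enumeration that progressively relaxes color-adjacency to weak-color-adjacency, anchored on the two preceding lemmas and the rank~$\leq2$ classification, with the bulk of the casework deferred --- is the same as the paper's, which organizes exactly this recursion into five incremental steps (first weakening \eqref{eqn:nsc} to the condition that supports may overlap only at roots of type $a(m)$ with $m>1$; then admitting plugs and the generalized plug \eqref{eqn:pl2}; then the $b(2)$-overlap localizations; then the $c(m)$-overlaps, handled via Remark~\ref{rem:tail}(2); and only at the end the remaining weak plugs).

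However, your opening reduction is not valid as stated. You claim that under \eqref{eqn:nsc} the conclusion already follows from the color-connected lemma together with Lemma~\ref{lem:plug}, via the dichotomy ``either every pair is color-adjacent, or some pair is weakly-color-adjacent only via a plug and Lemma~\ref{lem:plug} places us in family \fp.'' But Lemma~\ref{lem:plug} requires that \emph{every} pair of not color-adjacent spherical roots lie in one of the five plugs of \eqref{eqn:plug}, and this hypothesis is strictly stronger than weak-color-connectedness even when \eqref{eqn:nsc} holds. The system \fS-\ref{axonepluspplusoneplusqplusone} is a concrete counterexample: it satisfies \eqref{eqn:nsc}, it is weakly-color-connected (each spherical root is plug- or color-adjacent to the next one along the chain), it is not color-connected, and its two roots of type $a(p)$ and $a(q)$ form a non-color-adjacent pair lying in no plug --- the only localization with exactly three spherical roots containing both supports has the $a(1)$-root in the middle rather than at the ends, so it is none of the five systems of \eqref{eqn:plug}. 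Hence neither of the two previous lemmas applies, and your reduction would miss this system (and several other members of the family \fp). The flaw is recoverable, since the ``recursion proper'' of your second paragraph, run over all seeds including those satisfying \eqref{eqn:nsc} and chaining plug-adjacencies transitively, does produce these systems --- which is precisely how the paper proceeds --- but the shortcut must be dropped. A smaller second gap: to exclude the family \fq\ you must exhibit a pair of spherical roots joined by \emph{no chain} of weakly-color-adjacent pairs; producing a single pair that is not weakly-color-adjacent, as you do, does not by itself show that the system fails to be weakly-color-connected.
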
 

\begin{proof}
{\it Step 1.} We keep on assuming color-connectedness but weaken condition \eqref{eqn:nsc} and assume that 
\begin{equation}\label{eqn:co2}
\parbox{10.6cm}{if $\co(\alpha)\neq\emptyset$ and $\alpha\in\mathrm{supp}\,\sigma_1\cap\mathrm{supp}\,\sigma_2$ then at least one of the two spherical roots $\sigma_1,\sigma_2$ is of type $a(m)$ with $m>1$.}
\end{equation}

We must start with a spherical root of type $a(m)$, $m>1$, and proceed recursively as above, but notice that now we are allowing only the following two further possibilities of pairs of color-adjacent spherical roots:
\[\diagramdsn\qquad\qquad\qquad\qquad\begin{picture}(4200,3000)(-300,-1500)\put(0,0){\usebox{\atwo}}\put(1800,0){\usebox{\bsecondtwo}}\end{picture}\]
In particular, notice that in the following case the two spherical roots are not color-adjacent:
\[\begin{picture}(11100,600)(-300,-300)\put(0,0){\usebox{\atwo}}\put(1800,0){\usebox{\shortcm}}\end{picture}\] 
Only spherical roots of type $a(m)$ (with $m>1$) and $b(2)$ (with support not meeting $S^p$) are involved. We obtain only members of \fT-1 (of types $\mathsf A$, $\mathsf B$, $\mathsf D$, $\mathsf E$) and the case \fT-\ref{dsastfour}.

{\it Step 2.} We allow plugs and generalized plugs \eqref{eqn:pl2}. We obtain all the primitive spherical systems of the family \fp\ of the clan \fS\ (as in Lemma~\ref{lem:plug}) and of the family \fma\ of the clan \fT. 

{\it Step 3.}
We go on allowing the following as localizations
\[\begin{picture}(4200,1800)(-300,-900)\put(0,0){\usebox{\atwo}}\put(1800,0){\usebox{\btwo}}\put(3600,0){\usebox{\aprime}}\end{picture}\qquad\qquad\qquad\qquad\begin{picture}(4200,1800)(-300,-900)\put(0,0){\usebox{\atwo}}\put(1800,0){\usebox{\btwo}}\put(3600,0){\usebox{\aone}}\put(3600,600){\usebox{\tow}}\end{picture}\]
and we get the cases \fT-\ref{bcn}, \fT-\ref{bcn}' and \fT-\ref{fbcn}.

Allowing the following as localizations we get no new rank $>2$ cases (without positive $n$-combs with $n>1$).
\[\begin{picture}(6000,1800)(-300,-900)\put(0,0){\usebox{\dynkinbfour}}\multiput(0,0)(5400,0){2}{\usebox{\gcircle}}\end{picture}
\qquad\qquad
\begin{picture}(6000,1800)(-300,-900)\put(0,0){\usebox{\dynkinf}}\multiput(0,0)(3600,0){2}{\usebox{\gcircle}}\put(5400,0){\usebox{\wcircle}}\end{picture}
\qquad\qquad
\begin{picture}(2400,1800)(-300,-900)\put(0,0){\usebox{\aone}}\put(0,0){\usebox{\lefttriedge}}\put(1800,0){\usebox{\gcircle}}\end{picture}
\]

{\it Step 4.} We include all pairs of spherical roots with overlapping support, namely also the following (with a root of type $c(m)$ with $m\geq2$):
\[\begin{array}{c@{\hspace{2cm}}c}
\begin{picture}(12000,1800)(-300,-900)\put(0,0){\usebox{\vertex}}\multiput(0,0)(2700,0){2}{\usebox{\wcircle}}\multiput(0,-900)(2700,0){2}{\line(0,1){600}}\put(0,-900){\line(1,0){2700}}\put(2700,0){\usebox{\shortcm}}\end{picture}
&
\begin{picture}(9300,1800)(-300,-900)\put(0,0){\usebox{\aprime}}\put(0,0){\usebox{\shortcm}}\end{picture}
\\
\begin{picture}(11100,2400)(-300,-900)\put(0,0){\usebox{\atwo}}\put(1800,0){\usebox{\shortcm}}\end{picture}
&
\begin{picture}(9300,2400)(-300,-900)\put(0,0){\usebox{\aone}}\put(0,0){\usebox{\shortcm}}\end{picture}
\end{array}\]
First, we get the cases \fT-\ref{azthreeoneplusbtwo}, \fT-\ref{aytwooneplusbtwob}, \fT-\ref{aytwooneplusbtwof}, \ppc-\ref{onecombmodelb}, \ppc-\ref{onecomboverfavoritei} and \ppc-\ref{onecombfavoriteii}. The rest are all cases (with a spherical root $\sigma$ of type $c(m)$) that can be constructed as in Remark~\ref{rem:tail}(2). These latter cases are many, but easy to construct and most of them actually have a tail of type $c(m)$. In the notation of Remark~\ref{rem:tail}, they have a tail when the spherical system $\S'$ has a homogeneous distinguished subset of colors that does not contain the color $D'$ (and this can be checked case-by-case). The only cases without tail are \fT-\ref{genmodelc} (if rank $>2$), \fT-\ref{abxpluscq} and \fT-\ref{au5pluscq}.

{\it Step 5.} Finally, we include weak plugs and obtain the rest of the clan \fT\ (namely some members of \fT-\ref{model}, the family \fmb\ and the rest of the family \fc) and the rest of the family \fp\ of spherical systems with a primitive positive 1-comb (namely only the case \ppc-\ref{onecombfvar}).
\end{proof}

\subsubsection*{Gluing}

Let $\S=(S^p,\sr,\A)$ be a cuspidal spherical $R$-system. Let $S'\subset S$ be such that $\S'=((S')^p,\sr',\A')$, the spherical system obtained from $\S$ by localization on $S'$, is weakly-color-connected. Then $\S'$ is called weakly-color-connected component of $\S$ if $S'$ is maximal with this property. The supports of the sets of spherical roots of the weakly-color-connected components are clearly disjoint, but not necessarily pairwise orthogonal. We will say that a cuspidal spherical $R$-system is obtained by gluing its weakly-color-connected components. 

A spherical $R$-system is not uniquely determined by its weakly-color-connected components, see for example
\[\begin{picture}(2400,1800)(-300,-900)\put(0,0){\usebox{\rightbiedge}}\multiput(0,0)(1800,0){2}{\usebox{\aone}}\end{picture}\qquad\qquad\qquad\qquad\begin{picture}(2400,1800)(-300,-900)\put(0,0){\usebox{\rightbiedge}}\multiput(0,0)(1800,0){2}{\usebox{\aone}}\put(1800,600){\usebox{\tow}}\end{picture}\]

\begin{remark}\label{rem:glue}
If $\S^{(1)},\S^{(2)}$ are two weakly-color-connected components, let $\alpha_1\in\sr^{(1)}\cap S^{(1)}$ and $\sigma_2\in\sr^{(2)}$ be not orthogonal, then for $D\in\A(\alpha_1)$ the value of $c(D,\sigma_2)$ is not always uniquely determined, but there are some constraints: if $D\in\A(\alpha'_1)$ for some $\alpha'_1\in\sr^{(1)}\cap S^{(1)}$ different from $\alpha_1$ (we say that $D$ is not free) then $c(D,\sigma_2)=0$ or $\alpha'_1\not\perp\sigma_2$ (notice that the latter does not necessarily give rise to a plug). In particular, if both colors $D^+,D^-\in\A(\alpha_1)$ are not free, there can be no gluing with $\alpha_1$ not orthogonal to spherical roots of other weakly-color-connected components.
\end{remark}

Since we have classified all cuspidal weakly-color-connected spherical systems, it is now possible to construct any cuspidal spherical system by gluing.

\subsubsection*{Erasable and quasi-erasable weakly-color-connected components}

Let us introduce some further terminology: let $\S$ be a cuspidal spherical system. A weakly-color-connected component $\S'$ of $\S$ (or more generally a weakly-color-saturated localization $\S'$ of $\S$, i.e.\ a localization of $\S$ obtained by gluing some of its weakly-color-connected components) is called:
\begin{itemize}
\item isolated if $S'\perp(S\setminus S')$;
\item erasable if there exists a homogeneous $\ast$-distinguished subset of colors $\co_\ast$ of $\S'$ such that $c(D,\sigma)=0$, for all $D\in\co_\ast$ and all $\sigma\in\sr\setminus\sr'$;
\item quasi-erasable if there exists a non-empty $\ast$-distinguished subset of colors $\co_\ast$ of $\S'$ such that $c(D,\sigma)=0$, for all $D\in\co_\ast$ and all $\sigma\in\sr\setminus\sr'$.
\end{itemize}

It directly follows that if a cuspidal spherical system $\S$ admits two weakly-color-connected components such that one is isolated or both are quasi-erasable then $\S$ is decomposable. This obviously generalizes to weakly-color-saturated localizations of $\S$ on disjoint sets $S^{(1)},S^{(2)}\subset S$.

By abuse of terminology a weakly-color-connected component of rank 1 or 2 will be called tail if satisfies Definition \ref{def:tail} or \ref{def:exl} without the condition on the existence of the good distinguished subset of colors $\co'$. We will see below (Lemma~\ref{lem:A}) that in these hypotheses such $\co'$ always exists.

We now analyze all the cuspidal weakly-color-connected spherical systems which have been classified above. Case-by-case we start with a spherical system $\S^{(1)}$, see how it can be glued to another spherical system $\S^{(2)}$ such that $\S^{(1)}$ and $\S^{(2)}$ are weakly-color-saturated and say whether $\S^{(1)}$ is necessary isolated, erasable or quasi-erasable. 

{\it Rank 1.}
If a weakly-color-connected component is of rank 1 and equal to 
\[\begin{picture}(9000,600)(0,-300)\put(0,0){\usebox{\shortcm}}\end{picture}\]
then it is necessary isolated, the same is true if the spherical root of the weakly-color-connected component has support of type $\mathsf F_4$ or $\mathsf G_2$. If it is equal to 
\[\begin{picture}(9300,600)(-300,-300)\put(0,0){\usebox{\shortcsecondm}}\end{picture}\]
then it is necessarily erasable. If it is equal to the following case with a spherical root of type $b(m)$ $m\geq3$
\[\begin{picture}(7800,600)(-300,-300)\put(0,0){\usebox{\shortbsecondm}}\end{picture}\]
then it is necessarily quasi-erasable. If its spherical root is of type $b(m)$ $m\geq1$ ($\alpha_n\in S^p$ if $m\geq2$), $2b(m)$ $m\geq1$ or $d(m)$ $m\geq3$ then it is a tail. If its spherical root is of type $aa$ then it is a tail or it is glued to an $n$-comb component, if $n=1$ we get a rank 2 case (with support of type $\mathsf C_3$) which is isolated. If it is equal to
\[\begin{picture}(3900,600)(0,-300)\put(0,0){\usebox{\dynkinbthree}}\put(3600,0){\usebox{\gcircle}}\end{picture}\]
then it is isolated or it is glued to an $n$-comb component, if $n=1$ we get a rank 2 case which is isolated. If it is equal to the case of a spherical root of type $b(2)$ whose support does not meet $S^p$ then it is quasi-erasable or it is glued to two comb components so that the whole spherical system is decomposable. Notice that it remains the case of a spherical root of type $a(m)$ $m\geq1$, which will occur in some primitive cases below.

{\it Rank 2.} 
Analogously, if a weakly-color-connected component is of rank 2 then it is either isolated or erasable or quasi-erasable 
\[\begin{picture}(6000,600)(-300,-300)\put(0,0){\usebox{\dynkinbfour}}\multiput(0,0)(5400,0){2}{\usebox{\gcircle}}\end{picture}\]
or a tail or glued to an $n$-comb component (the corresponding cases are erasable, if $n=1$ and the 1-comb is primitive they are rank 3 spherical systems: \ppc-\ref{onecombaa}, \ppc-\ref{onecomb2a}, \ppc-\ref{onecombac}, \ppc-\ref{onecombef}, \ppc-\ref{onecombtwocombb}) or a $2$-comb or with two spherical roots of type $a(m)$ $m\geq2$ with overlapping supports (the two latter cases will be analyzed below).

{\it Clan \fR.} 
Let us start with $\a^{\faa}(p,p)$, $p>2$. It can only be glued to an $n$-comb component, if $n=1$ we get the case \ppc-\ref{onecombaa} which is isolated. Starting with $\a^\fo(p)$ the situation is very similar: we get the case \ppc-\ref{onecomb2a} which is isolated. Analogously with the case \fR-\ref{ac}: we get the cases \ppc-\ref{onecombac} which is isolated. The cases \fR-\ref{aapplusqplusp}, \fR-\ref{dcneven}, \fR-\ref{dcnodd} and \fR-\ref{efseven} are erasable. All the other cases of the clan \fR\ are isolated. 

{\it Clan \fS.} 
Let us consider the color-connected cases of the clan \fS. A weakly-color-connected component of this kind is isolated or erasable or it is equal to one of the following cases: 
\begin{itemize}
\item[-] $\bd^\fx(p,p)$ is quasi-erasable or, if $p= 2$, glued to a comb and, if the latter is primitive, as in \ppc-\ref{bxoneonetwoplusonecomb} which is isolated;
\item[-] $\bd^\fx(p-1,p)$ is quasi-erasable or, if $p=3$ glued to a comb and, if the latter is primitive, as in \ppc-\ref{bxthreetwoplusonecomb} which is isolated or, if $p=2$ namely $\a^\fx(1,1,1)$, not necessarily quasi-erasable;
\item[-] $\a^\fy(p,p)$ is erasable or glued to combs in a decomposable system or, if $p=2$, is not necessarily quasi-erasable;
\item[-] $\a^\fy(p-1,p)$ is quasi-erasable or glued to combs in a decomposable system or, if $p=2$, glued to a comb as in \ppc-\ref{ayonetwoplusonecombf} which is isolated;
\item[-] $\ab^\fy(p,p)$ and $\ad^\fy(p+1,p+1)$ are erasable or glued to combs and, if the latter are primitive, as in \ppc-\ref{byppplusonecomb}, \ppc-\ref{bytwotwoplusonecombc}, \ppc-\ref{dyplusonecomb} or \ppc-\ref{azplusonecomb} which are erasable;
\item[-] $\ab^\fy(p-1,p)$, $\ad^\fy(p-1,p+1)$ and $\a^\fz(1,3)$ are quasi-erasable;
\item[-] $\a^\fu(5)$ is quasi-erasable;
\item[-] $\ac^\fz(1,3)$ and $\b^\fw(3)$ can only be glued to combs in a decomposable system.
\end{itemize}

A weakly-color-connected component, if equal to a remaining case of the clan \fS, is isolated or erasable or is equal to \fS-\ref{axonepluspplusone}, \fS-\ref{axonepluspplusoneone}, \fS-\ref{aypplusqplusp} (of rank 5) or \fS-\ref{aypplusqpluspminusone} (of rank 4) which are quasi-erasable, or can only be glued to combs in a decomposable system.

{\it Clan \fT.}
Let us consider the cases of the clan \fT\ with only spherical roots of type $a(m)$. A weakly-color-connected component of this kind is isolated or erasable or it is equal to one of the following cases:
\begin{itemize}
\item[-] $\a^\fm(2p+1)$ is erasable or glued to combs in a decomposable system or, if $p=2$, is not necessarily quasi-erasable;
\item[-] $\a^\fm(2p+2)$ is quasi-erasable or glued to combs in a decomposable system;
\item[-] $\d^\fm(2p+1)$ and $\e^\fm(6)$ are quasi-erasable;
\item[-] \fT-\ref{acastnplusaone} and \fT-\ref{dsnplusaone} are quasi-erasable;
\item[-] \fT-\ref{dsastfour} is not necessarily quasi-erasable.
\end{itemize}

A weakly-color-connected component, if equal to a remaining case of the clan \fT, is isolated or erasable or is equal to $\b^\fm(2p+2)$, \fT-\ref{bcn} or \fT-\ref{bcn}', which are quasi-erasable, or can only be glued to combs in a decomposable system.

{\it Tails of type $c(m)$.}
For completeness we have to consider also weakly-color-connected spherical systems with a tail of type $c(m)$ (with overlapping support), but as components they are necessarily isolated, erasable or glued to combs in a decomposable system. This can be seen by collapsing the spherical root of type $c(m)$ (as in \ref{subsec:qpss} for the cases of the family \fc): one obtains only weakly-color-connected spherical systems that have already been considered above.

{\it Remaining cases.}
An $n$-comb that is a weakly-color-connected component by itself can be glued to a positive 1-comb (cases \ppc-\ref{onecombtwocombb} and \ppc-\ref{onecombthreecombd} which are erasable) or more generally to a spherical system with only spherical roots of type $a(1)$ and a free color (some cases listed in the family \fq\ of the clan \fS\ which are erasable).

Finally, if we have a rank 1 weakly-color-connected component with a spherical root $\sigma$ of type $a(m)$, denote by $D_1,D_2$ its colors, such that there exist two weakly-color-connected spherical roots $\sigma_1,\sigma_2\neq\sigma$ with $c(D_i,\sigma_i)\neq0$, $i=1,2$, then we obtain only the remaining cases listed in the family \fq\ of the clan \fS\ which are erasable.  

\subsubsection*{Type $\mathsf A$ connected subdiagrams of the Dynkin diagram}

\begin{lemma}\label{lem:A}
Let $S^{(1)}\sqcup S^{(2)}=S$ be such that the corresponding localizations $\S^{(1)},\S^{(2)}$ of $\S$ are weakly-color-saturated. Assume that the Dynkin diagram of $S^{(1)}$ has a connected component of type $\mathsf A_r$, let $\alpha_1,\ldots,\alpha_r$ be its simple roots (labelled as usual). Then
\begin{enumerate}
\item if $S^{(1)}\setminus\{\alpha_1,\alpha_r\}$ is orthogonal to $S^{(2)}$ then $\alpha_1+\ldots+\alpha_r\in\sr$ (and the localization of $\S$ on $\{\alpha_1,\ldots,\alpha_r\}$ is a weakly-color-connected component by itself) or $\S^{(1)}$ is quasi-erasable; 
\item if $S^{(1)}\setminus\{\alpha_1\}$ is orthogonal to $S^{(2)}$ then $\S^{(1)}$ is erasable.
\end{enumerate}
\end{lemma}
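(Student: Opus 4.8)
The plan is to deduce the lemma from the explicit classification of cuspidal weakly-color-connected spherical systems established above (the three lemmas of~\ref{subsec:col}, in particular Lemma~\ref{lem:plug}) together with the case-by-case determination, carried out in the subsection on erasable and quasi-erasable components, of which weakly-color-connected components are isolated, erasable, quasi-erasable or tails. The only new ingredient is the translation of the two orthogonality hypotheses into a restriction on where $\S^{(1)}$ can be glued to $\S^{(2)}$.

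First I would record an elementary geometric fact. The support of a spherical root is connected in the Dynkin diagram, and $\{\alpha_1,\ldots,\alpha_r\}$ is a connected component of the Dynkin diagram of $S^{(1)}$; hence a spherical root $\sigma\in\sr$ with $\mathrm{supp}\,\sigma\not\subset S^{(1)}$ that is not orthogonal to $\{\alpha_1,\ldots,\alpha_r\}$ must contain, inside this set, an initial segment $\alpha_1,\ldots,\alpha_j$ or a final segment $\alpha_j,\ldots,\alpha_r$ reaching one of the two endpoints before it can leave into $S^{(2)}$. Under the hypothesis of~(2) this already forces the whole of $S^{(1)}$ to be attached to $\S^{(2)}$ only through $\alpha_1$; under the hypothesis of~(1) it forces such a $\sigma$ to meet $\{\alpha_1,\ldots,\alpha_r\}$ only near $\alpha_1$ or near $\alpha_r$. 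In particular, a color of $\S^{(1)}$ can pair nontrivially with some $\sigma\in\sr\setminus\sr^{(1)}$ only if it sits on $\alpha_1$ (case~(2)) or on $\alpha_1$ or $\alpha_r$ (case~(1)).

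Next I would single out, inside $\S^{(1)}$, the weakly-color-connected component $\S_0$ whose spherical roots are supported in $\{\alpha_1,\ldots,\alpha_r\}$. Since the segment is a connected Dynkin component of $S^{(1)}$, no spherical root of $\S^{(1)}$ straddles it, and one checks that the spherical roots supported in the segment form a single weakly-color-connected component whose roots are all supported there; cuspidality of $\S$ guarantees that $\S_0$ is non-empty, the very small cases $r\leq2$ being absorbed in the rank $\leq2$ analysis. By the classification, $\S_0$ belongs to an explicit finite list, and I would run through it. For~(1): either the $\mathsf A_r$-segment carries the single spherical root $\alpha_1+\ldots+\alpha_r$ of type $a(r)$ and no color sitting on $\alpha_1$ or $\alpha_r$ is shared with a neighbouring spherical root, so that the localization of $\S$ on $\{\alpha_1,\ldots,\alpha_r\}$ is a weakly-color-connected component by itself; or, in every remaining case of the list, the per-component discussion exhibits a non-empty $\ast$-distinguished subset of colors of $\S^{(1)}$ none of which sits on $\alpha_1$ or $\alpha_r$ --- typically the full color set of $\S^{(1)}$ with the colors on these two endpoints removed, which stays $\ast$-distinguished because an $\mathsf A$-segment can always be ``collapsed'' as in~\ref{subsec:qpss} --- and then, by the previous step, $\S^{(1)}$ is quasi-erasable. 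For~(2): since $\alpha_1$ is an endpoint of the $\mathsf A$-segment, collapsing inward from $\alpha_1$ annihilates all spherical roots of $\S^{(1)}$, so the analogous subset, now made only of colors not sitting on $\alpha_1$, is moreover homogeneous; by the first step its colors pair to zero with all of $\sr\setminus\sr^{(1)}$, hence $\S^{(1)}$ is erasable.

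The hard part will be the bookkeeping in~(1): one has to check that the two stated alternatives are exhaustive and that, in the second one, the required non-empty $\ast$-distinguished subset can genuinely be found among the colors not sitting on $\alpha_1$ or $\alpha_r$. This is not uniform and relies on knowing, for each admissible $\S_0$ from the classification, precisely which colors sit on the endpoints $\alpha_1,\alpha_r$, whether they are free in the sense of Remark~\ref{rem:glue}, and whether they are identified with colors of neighbouring spherical roots --- exactly the data tabulated in the subsection on erasable and quasi-erasable components. The most delicate subcases are those in which $\S^{(1)}$ is not a single weakly-color-connected component but is glued along $\alpha_1$ or $\alpha_r$ to another component, such as an $n$-comb or a rank~$1$ component with a spherical root of type $aa$, where one must verify that the gluing does not spoil either the $\ast$-distinguishedness or the vanishing property of the chosen subset.
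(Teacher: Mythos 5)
Your proposal is correct and follows essentially the same route as the paper: the paper's own proof simply reduces both statements to the preceding case-by-case inventory of weakly-color-connected components, observing that the components listed there as ``not necessarily quasi-erasable'' (resp.\ ``not necessarily erasable'') become quasi-erasable when their support meets a type $\mathsf A$ component of the Dynkin diagram (resp.\ erasable when it contains an extremal simple root of one). Your extra scaffolding --- the connectedness of supports and the singled-out component $\S_0$ --- is harmless, and the ``bookkeeping'' you defer to is exactly the tabulated data the paper invokes.
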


\begin{proof}\
\begin{enumerate}
\item It is enough to look at the weakly-color-connected spherical systems of rank $>1$ that are not necessarily quasi-erasable: notice that they are quasi-erasable if their support meets a connected component of type $\mathsf A$ of the Dynkin diagram of $S$.
\item Similarly, it is enough to look at the weakly-color-connected spherical systems that are not necessarily erasable: notice that they are erasable if their support contains an extremal simple root of a connected component of type $\mathsf A$ of the Dynkin diagram of $S$. 
\end{enumerate}
\end{proof}

The following completes the proof of Theorems \ref{thm:pss} and \ref{thm:ppc}.

\begin{proposition}
Let $\S$ be a cuspidal spherical $R$-system without tails and positive $n$-combs with $n>1$. If $\S$ is not primitive and has no primitive positive 1-combs, there exists a decomposition $S^{(1)}\sqcup S^{(2)}=S$ such that the corresponding localizations $\S^{(1)},\S^{(2)}$ of $\S$ are weakly-color-saturated and both quasi-erasable, therefore $\S$ is decomposable.
\end{proposition}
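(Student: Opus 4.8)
The plan is to pass to the weakly-color-connected components of $\S$ and to read off the decomposition from the component-by-component analysis carried out above. First I would check that $\S$ has at least two weakly-color-connected components. If it had only one, then $\S$ itself would be cuspidal and weakly-color-connected, and the lemma classifying such systems says that $\S$ is primitive, or has a tail of type $c(m)$, or carries a positive $n$-comb; in the last case, if $n=1$ the $1$-comb is primitive and $\S$ lies in the family $\fp$. All of these contradict the standing hypotheses (that $\S$ is not primitive, has no tails, no positive $n$-comb with $n>1$, and no primitive positive $1$-comb). So let $\S^{(1)},\dots,\S^{(k)}$, with $k\ge2$, be the weakly-color-connected components of $\S$, with pairwise disjoint supports $S_1\sqcup\dots\sqcup S_k=S$.

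The core of the argument is the case-by-case analysis of the preceding subsection, in which every cuspidal weakly-color-connected spherical system was examined as a component of a cuspidal ambient system and, up to gluing, declared isolated, erasable, quasi-erasable, a tail, a positive $n$-comb, glued to a comb, or one of finitely many exceptional ``not necessarily quasi-erasable'' cases. Feeding in the hypotheses on $\S$: tails and positive $n$-combs with $n>1$ are excluded outright, and a comb component (as well as a rank-$1$ component of type $a(m)$ both of whose colors link outward) gets merged with a neighbour into an erasable weakly-color-saturated block, by the part of the analysis identifying such blocks with the erasable members of the family $\fq$ of the clan $\fS$. Consequently, after these merges, $S$ is partitioned into weakly-color-saturated blocks $T_1,\dots,T_\ell$, each of which is isolated, erasable, or quasi-erasable in $\S$. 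If some block $T_j$ is isolated, with support $S'$, then $S'\perp(S\setminus S')$, hence every color of the $S'$-side pairs to $0$ with every spherical root of the complement and conversely; taking $S^{(1)}=S'$ and $S^{(2)}=S\setminus S'$, both localizations are weakly-color-saturated and quasi-erasable, and we conclude by the remark, noted after Definition~\ref{def:dec}, that an isolated component yields decomposability.

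Otherwise every block $T_j$ is erasable or quasi-erasable; since erasability implies quasi-erasability, each block carries a nonempty $\ast$-distinguished subset of colors $\co_{\ast,j}\subset\co(T_j)$ with $c(D,\sigma)=0$ for all $D\in\co_{\ast,j}$ and every spherical root $\sigma\notin\sr(T_j)$, and one chooses these subsets inside the blocks, away from the simple roots involved in the gluings. Splitting $\{T_1,\dots,T_\ell\}$ into two nonempty groups with unions of supports $S^{(1)}$ and $S^{(2)}$, and setting $\co'=\bigcup_{T_j\subset S^{(1)}}\co_{\ast,j}$, $\co''=\bigcup_{T_j\subset S^{(2)}}\co_{\ast,j}$, I would verify the two clauses of Definition~\ref{def:dec}: from the vanishing conditions one gets $\sr\setminus\sr^{(1)}\subset\sr/\co'$ and $\sr\setminus\sr^{(2)}\subset\sr/\co''$, and since $\sr^{(1)}\cap\sr^{(2)}=\emptyset$ this yields $\sr\subset(\sr/\co')\cup(\sr/\co'')$; moreover $S^p/\co'\setminus S^p$ stays inside $S^{(1)}$ and $S^p/\co''\setminus S^p$ inside $S^{(2)}$, and they are orthogonal, because the chosen $\co_{\ast,j}$ lie in the interior of the blocks and do not place all the colors over a simple root of the other side into $\co'$ or $\co''$. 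Thus $\co'$ and $\co''$ decompose $\S$, and $\S^{(1)},\S^{(2)}$ are weakly-color-saturated and both quasi-erasable.

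I expect the main obstacle to be the handful of exceptional ``not necessarily quasi-erasable'' weakly-color-connected components, namely $\a^\fx(1,1,1)$, $\a^\fy(2,2)$, $\a^\fm(5)$, and \fT-\ref{dsastfour}: for each of them one must check that whenever it genuinely occurs inside a cuspidal $\S$ with no tails and no forbidden combs, the restrictive gluing constraints of Remark~\ref{rem:glue} force the complementary weakly-color-saturated localization to remain quasi-erasable, so that the two-block grouping still goes through. The second delicate point is the bookkeeping behind the orthogonality clause of Definition~\ref{def:dec} when merging blocks and choosing the $\ast$-distinguished subsets of colors, so as to guarantee that the $S^p$ newly produced on one side never reaches across to a simple root of the other side.
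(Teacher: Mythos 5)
Your overall strategy is the one the paper follows: use the classification of cuspidal weakly-color-connected systems to rule out $\S$ having a single component, invoke the component-by-component analysis (isolated / erasable / quasi-erasable, with combs and outward-linked rank-one $a(m)$ components merged into erasable blocks of type \fq), and then split the resulting weakly-color-saturated blocks into two groups covering $S$. You also correctly single out the same four problematic components the paper's analysis flags as ``not necessarily quasi-erasable'', namely $\a^\fx(1,1,1)$, $\a^\fy(2,2)$, $\a^\fm(5)$ and \fT-\ref{dsastfour}. Up to that point the proposal matches the paper's (very terse) argument.

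The gap is in how you propose to dispose of those four cases. You plan to use Remark~\ref{rem:glue} to check that ``the complementary weakly-color-saturated localization remains quasi-erasable.'' But the complement is not where the difficulty lies: if the exceptional component $T_j$ itself fails to be quasi-erasable, then no grouping of the blocks that keeps $T_j$'s group equal to $T_j$ can satisfy the conclusion, however well-behaved the other side is, and since weakly-color-saturated localizations are unions of components there is no further freedom in choosing the cut. What must be shown is that the side of the cut containing $T_j$ is quasi-erasable in the actual ambient system. This is exactly the role of Lemma~\ref{lem:A}(1) in the paper's proof: its proof rests on the observation that the not-necessarily-quasi-erasable components become quasi-erasable as soon as their support meets a connected component of type $\mathsf A$ of the Dynkin diagram of their side of the cut, which is what happens when such a component is genuinely glued into a larger cuspidal, non-primitive $\S$ (if it were not glued to anything, $\S$ would be the exceptional component itself, hence primitive and excluded). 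Without Lemma~\ref{lem:A} --- or an equivalent direct enumeration of the admissible gluings of the four exceptional components showing that the side containing them is always quasi-erasable --- the argument does not close; verifying quasi-erasability of the complement alone cannot fill this hole.
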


\begin{proof}
We are left to consider the cuspidal spherical $R$-systems $\S$, without tails and positive $n$-combs with $n>1$, with a weakly-color-connected component that is not quasi-erasable. A case-by-case analysis of the weakly-color-connected spherical systems that are not necessarily quasi-erasable, together with Lemma~\ref{lem:A}, shows that, if $\S$ is not primitive and has no primitive positive 1-combs, we can always choose such a decomposition $S^{(1)}\sqcup S^{(2)}=S$.
\end{proof}

Furthermore, another direct consequence of Lemma~\ref{lem:A} is the following, which applies more generally to all spherical systems.

\begin{proposition}\label{prop:era}
Let $\S$ be a spherical $R$-system. Then there exists a decomposition $S^{(1)}\sqcup S^{(2)}=S$ such that the corresponding localization $\S^{(1)}$ is weakly-color-saturated and erasable.
\end{proposition}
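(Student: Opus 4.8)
The plan is to deduce Proposition~\ref{prop:era} directly from Lemma~\ref{lem:A}(2), using only the elementary combinatorics of the Dynkin diagram of $S$ together with the classification of weakly-color-connected spherical systems established above. The key point is that Lemma~\ref{lem:A}(2) converts a diagram-theoretic condition — the existence of a connected component of type $\mathsf A$ in $S^{(1)}$ all of whose simple roots but one extremal one are orthogonal to $S^{(2)}$ — into erasability of $\S^{(1)}$; so the whole proof reduces to exhibiting such a decomposition.

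First I would reduce to the cuspidal case: write $S=\mathrm{supp}(\sr)\sqcup S''$ where $S''$ consists of the simple roots not in the support of any spherical root; by induction (Definition~\ref{def:ind}) the localization on $\mathrm{supp}(\sr)$ is cuspidal, and if the Proposition holds for it we obtain a decomposition $S^{(1)}\sqcup S^{(2)'}=\mathrm{supp}(\sr)$ with $\S^{(1)}$ weakly-color-saturated and erasable, and then $S^{(1)}\sqcup(S^{(2)'}\sqcup S'')=S$ works for $\S$ since the colors in the relevant homogeneous $\ast$-distinguished subset of $\S^{(1)}$ still pair to zero with all spherical roots outside $\sr^{(1)}$ (there are no new spherical roots in $S''$). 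So assume $\S$ cuspidal, and let $\S^{(1)},\S^{(2)},\ldots$ be its weakly-color-connected components; the supports of their spherical root sets are pairwise disjoint, but the corresponding subsets of $S$ need not be pairwise orthogonal.

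Next I would run the case-by-case analysis already carried out in the subsection ``Erasable and quasi-erasable weakly-color-connected components''. That analysis shows that every weakly-color-connected spherical system is, as a component, either isolated, or erasable, or quasi-erasable, or glued to an $n$-comb, or a tail, or falls into one of a short explicit list of exceptions; in particular the components that are \emph{not necessarily} erasable are precisely those whose support does not contain an extremal simple root of a type $\mathsf A$ connected component of the Dynkin diagram of $S$. The strategy is then: pick any weakly-color-connected component $\S^{(1)}$; if it is already erasable (for the ambient $\S$), take $S^{(2)}=S\setminus S^{(1)}$ and we are done. Otherwise, by the classification $\S^{(1)}$ is one of the listed not-necessarily-erasable types, and by Lemma~\ref{lem:A}(2) the failure of erasability forces $S^{(1)}\setminus\{\alpha_1\}$ to be non-orthogonal to $S\setminus S^{(1)}$ for every extremal $\alpha_1$ — equivalently there is a simple root $\alpha\in S\setminus S^{(1)}$ attached in the Dynkin diagram to an interior node of $S^{(1)}$, or $S^{(1)}$ has no type $\mathsf A$ component to begin with. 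Absorb such neighbouring components into a larger weakly-color-saturated localization $\widetilde\S^{(1)}$ obtained by gluing, and repeat; since $S$ is finite this process terminates, and one checks from the list that the terminal $\widetilde\S^{(1)}$ is always erasable (for instance because it acquires an extremal type $\mathsf A$ node, or because gluing an $n$-comb or another component produces one of the erasable glued configurations). Setting $S^{(2)}=S\setminus\widetilde S^{(1)}$ finishes the proof.

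The main obstacle, and the step requiring genuine care rather than bookkeeping, is verifying that the ``absorb a neighbour and repeat'' procedure cannot cycle through purely non-erasable configurations without ever reaching an erasable one — i.e.\ that the list of not-necessarily-erasable weakly-color-connected systems is ``closed enough'' under gluing that one always exits it. This is exactly where the explicit classification of all cuspidal weakly-color-connected spherical systems (the four lemmas preceding ``Gluing'') is indispensable: one must inspect, for each not-necessarily-erasable type, every way it can be glued to a neighbour and confirm erasability of the result (appealing, where the neighbour abuts an interior node, to the diagram-combinatorial half of Lemma~\ref{lem:A}(2)). The remaining verifications — that isolated, erasable, quasi-erasable, comb-glued and tail components all either directly give an erasable localization or can be absorbed — are routine given the preceding subsection, so I would state them briefly and refer back rather than reprove them.
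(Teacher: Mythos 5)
Your proposal leans on the same two pillars as the paper's proof: Lemma~\ref{lem:A}(2) as the bridge from a diagram-theoretic condition to erasability, and the case-by-case analysis of weakly-color-connected components carried out just before it. But the way you organize the search for the decomposition is genuinely different, and the difference hides the one real gap. The paper's argument is a single symmetric observation: for \emph{any} weakly-color-saturated decomposition $S^{(1)}\sqcup S^{(2)}=S$, either $S^{(1)}\perp S^{(2)}$ or the Dynkin diagram of $S^{(1)}$ \emph{or of} $S^{(2)}$ has a connected component of type $\mathsf A$ (a connected Dynkin diagram carries at most one node of degree three and at most one multiple bond, so the side of a cut that avoids this feature is a union of simply-laced paths); one then arranges the cut so that such a type $\mathsf A$ component meets the other side only at an extremal node, and Lemma~\ref{lem:A}(2) finishes. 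No iteration is needed, and crucially one is free to switch to the complement.

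Your procedure, by contrast, only ever \emph{enlarges} $\widetilde S^{(1)}$, and that monotonicity is the problem: you must rule out that the absorption terminates only at $\widetilde S^{(1)}=S$, with empty complement. In that terminal case, erasability of $\widetilde\S^{(1)}$ amounts to $\S$ itself admitting a homogeneous distinguished subset of colors, which is essentially the Corollary that the paper \emph{deduces from} Proposition~\ref{prop:era}; asserting it here without a separate induction is circular, and your stated reasons for erasability of the terminal configuration (``it acquires an extremal type $\mathsf A$ node'', ``gluing a comb produces an erasable configuration'') all presuppose a nonempty complement. The repair is exactly the paper's symmetry: when every extremal node of the current $\widetilde S^{(1)}$ fails the hypothesis of Lemma~\ref{lem:A}(2), apply the lemma to the complement across the offending cut instead of absorbing further. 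With that change (and granting the deferred case-by-case closure check, which is really the content of Lemma~\ref{lem:A} and the preceding subsection rather than something new you must supply), your argument closes. The cuspidal reduction at the start is harmless but unnecessary, since Lemma~\ref{lem:A} already applies to arbitrary $\S$.
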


\begin{proof}
Notice that for every decomposition $S^{(1)}\sqcup S^{(2)}=S$ either $S^{(1)}\perp S^{(2)}$ or the Dynkin diagram of $S^{(1)}$ (or of $S^{(2)}$) has a connected component of type $\mathsf A$. Therefore, by Lemma~\ref{lem:A}, for all spherical $R$-systems $\S$ there exists such a decomposition.
\end{proof}

Therefore, by the way, we have also proved the following

\begin{corollary}
The whole set of colors of a spherical system is distinguished and good.
\end{corollary}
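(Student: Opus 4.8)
The plan is to handle the two assertions separately: \emph{goodness} is essentially immediate, and \emph{distinguishedness} is where the iteration of Proposition~\ref{prop:era} is used.

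\emph{Goodness.} By the definitions in \ref{subsec:quo}, $\S/\co$ has empty set of colors (identified with $\co\setminus\co$), and its set of spherical roots is the set $\sr/\co$ of minimal generators of the semigroup $\{\sigma\in\mathbb N\sr:c(D,\sigma)=0\ \forall D\in\co\}$. I would first observe that this semigroup is trivial. Indeed, if $c(D,\sigma)=0$ for every color $D$, then: for $\alpha\in S^a$ axiom~(A2) gives $\langle\alpha^\vee,\sigma\rangle=c(D^+_\alpha,\sigma)+c(D^-_\alpha,\sigma)=0$; for $\alpha\in S^{2a}$ the color $D_{2\alpha}$ gives $\tfrac12\langle\alpha^\vee,\sigma\rangle=0$; for $\alpha\in S^b$ the color $D_\alpha$ gives $\langle\alpha^\vee,\sigma\rangle=0$; and for $\alpha\in S^p$ axiom~(S) forces $\alpha\perp\sigma$, hence again $\langle\alpha^\vee,\sigma\rangle=0$. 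So $\langle\alpha^\vee,\sigma\rangle=0$ for all $\alpha\in S$, and since the simple coroots span, $\sigma=0$. Therefore $\sr/\co=\emptyset$ and $S^p/\co=S$, so that $\S/\co=(S,\emptyset,\emptyset)$, which trivially satisfies all the axioms of a spherical $R$-system (everything being vacuous). Hence $\co$ is good as soon as it is distinguished, and moreover the corresponding quotient is homogeneous.

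\emph{Distinguishedness.} The plan is to iterate Proposition~\ref{prop:era}. It produces a decomposition $S=S^{(1)}\sqcup S^{(2)}$ with $\S^{(1)}$ a weakly-color-saturated erasable localization, together with a $\ast$-distinguished subset $\co_\ast\subset\co$ of colors of $\S^{(1)}$ which is homogeneous in $\S^{(1)}$ and satisfies $c(D,\sigma)=0$ for all $D\in\co_\ast$ and all $\sigma\in\sr\setminus\sr^{(1)}$. A positive element $v_1\in\mathbb N_{>0}\co_\ast$ of $\S^{(1)}$ is then positive in $\S$ as well, precisely because of this orthogonality relation. Using linear independence of spherical roots one checks that $\sr/\co_\ast=\sr\setminus\sr^{(1)}$ and that $\co_\ast$ is good, with $\S/\co_\ast$ the spherical system having colors $\co\setminus\co_\ast$ and spherical roots $\sr\setminus\sr^{(1)}$; its rank is strictly smaller, since $\co_\ast\neq\emptyset$ forces $\sr^{(1)}\neq\emptyset$. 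Repeating the construction for $\S/\co_\ast$, the process terminates and yields a partition $\co=\co_{\ast,1}\sqcup\dots\sqcup\co_{\ast,k}$ with positive elements $v_i\in\mathbb N_{>0}\co_{\ast,i}$; each $v_i$ is positive in $\S$ because an earlier-erased component becomes part of $S^p$ in all later quotients, hence stays orthogonal to every color peeled off afterwards. Then $v=v_1+\dots+v_k\in\mathbb N_{>0}\co$ is positive in $\S$, so $\co$ is distinguished, and by the first part it is good.

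\emph{Main obstacle.} The delicate point is the bookkeeping in the iteration: that the subsets $\co_{\ast,i}$ genuinely exhaust $\co$ (every color must be captured by some erasable weakly-color-saturated component — this needs attention for colors attached to combs and to gluings of components, cf.\ Remark~\ref{rem:glue}), and that each $v_i$, constructed in a quotient, remains positive with respect to the spherical roots erased at \emph{earlier} stages and not just those still present. Both are controlled by the explicit case-by-case list of erasable weakly-color-connected (and weakly-color-saturated) components and their homogeneous $\ast$-distinguished subsets established just before Lemma~\ref{lem:A}: in each case ``erasing'' is a concrete operation that deletes the component and enlarges $S^p$ accordingly, which is exactly what makes the orthogonality relations propagate along the chain of quotients.
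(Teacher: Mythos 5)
Your overall strategy is the paper's: for distinguishedness you iterate Proposition~\ref{prop:era} exactly as the paper does, and your goodness argument (showing that $\S/\co=(S,\emptyset,\emptyset)$ because $c(D,\sigma)=0$ for all $D$ forces $\langle\alpha^\vee,\sigma\rangle=0$ for every $\alpha\in S$, hence $\sigma=0$) is correct and makes explicit what the paper leaves implicit. The difference is that the paper runs the recursion entirely through quotients --- $\co_\ast^{(1)}$ is a good distinguished subset of $\S$ and Proposition~\ref{prop:era} is reapplied to $\S/\co_\ast^{(1)}$, which is $\S^{(2)}$ up to induction --- whereas you try to exhibit the positive element of $\mathbb N_{>0}\co$ directly as $v_1+\dots+v_k$.

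It is in that last step that your justification fails. Erasability gives $c(D,\sigma)=0$ for $D\in\co_{\ast,1}$ and $\sigma\in\sr\setminus\sr^{(1)}$, i.e.\ the colors erased early are orthogonal to the roots that survive; what you need is the transposed statement, that a color $D$ surviving to a later stage pairs non-negatively with the roots $\sigma\in\sr^{(1)}$ erased at stage one. That is false in general: the supports of the two weakly-color-saturated pieces are disjoint but not orthogonal, and (cf.\ Remark~\ref{rem:glue} and the rank-$2$ tables) a color attached to a simple root of $S^{(2)}$ adjacent to $\mathrm{supp}\,\sigma$ satisfies $c(D,\sigma)=\langle\alpha^\vee,\sigma\rangle<0$. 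Your stated reason does not repair this: only the $\alpha$ with $\co(\alpha)\subset\co_{\ast,1}$ enter $S^p/\co_{\ast,1}$ (not the whole erased support), and even where axiom (S) applies it says those simple roots are orthogonal to the \emph{remaining} spherical roots, which is again the wrong direction. To close the gap you must either take $v=N_1v_1+\dots+N_kv_k$ with $N_1\gg\dots\gg N_k$ and prove that $v_1$ can be chosen \emph{strictly} positive on every $\sigma\in\sr^{(1)}$ (this is what homogeneity of $\co_{\ast,1}$ should provide, but it requires an argument), or invoke the fact that a distinguished subset of a quotient pulls back, together with the colors defining the quotient, to a distinguished subset of the original system --- which is how the paper's one-line recursion is meant to be read.
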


\begin{proof}
Let $S^{(1)}\sqcup S^{(2)}=S$ be a decomposition as in Proposition~\ref{prop:era}. Call $\co_\ast^{(1)}$ a corresponding homogeneous distinguished subset of colors of $\S^{(1)}$, it can be identified with a good distinguished subset of colors of $\S$. Proposition~\ref{prop:era} can then be applied recursively to the quotient $\S/\co_\ast^{(1)}$ (which is equal to $\S^{(2)}$ up to induction).
\end{proof}

\section{Quotients of general spherical systems}\label{sec:prop}

\refstepcounter{subsection}

\begin{theorem}\label{prop:dist}
All distinguished subsets of colors of a spherical system are good.
\end{theorem}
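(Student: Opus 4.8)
The plan is to leverage the classification obtained in Section~\ref{sec:cla} together with the structural results on composing, joining combs, adding tails, and gluing. Let $\S=(S^p,\sr,\A)$ be a spherical $R$-system with set of colors $\co$, and let $\co'\subset\co$ be a distinguished subset of colors. We must show $\S/\co'$ satisfies the axioms of spherical system. First I would reduce to the cuspidal case: since localization and induction commute with taking quotients in the obvious way, and since a distinguished subset of colors of $\S$ restricts to a distinguished subset of colors of the cuspidal localization $\S^0$ with the same quotient behavior, it suffices to treat cuspidal $\S$. Next I would observe that Remark~\ref{rem:pgood} already handles the case where $\S$ is primitive or has a primitive positive $1$-comb, and Remark~\ref{rem:lgood} handles rank $\leq 2$; so the content of the theorem is to propagate goodness through the combinatorial constructions (gluing of weakly-color-connected components, joining of positive combs, adding of tails) by which a general cuspidal spherical system is built from these building blocks.

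The key step is an inductive argument on the rank, using Proposition~\ref{prop:era}. Given $\S$ and a distinguished $\co'$, I would invoke Proposition~\ref{prop:era} to find a decomposition $S^{(1)}\sqcup S^{(2)}=S$ with $\S^{(1)}$ weakly-color-saturated and erasable; let $\co_\ast^{(1)}$ be the associated homogeneous good distinguished subset of colors of $\S^{(1)}$, identified with a good distinguished subset of $\S$ (this identification and its goodness are exactly what the Corollary after Proposition~\ref{prop:era} provides). The point is then to compare $\co'$ with $\co_\ast^{(1)}$. If $\co_\ast^{(1)}\subset\co'$, one passes to $\S/\co_\ast^{(1)}$, which up to induction equals $\S^{(2)}$ (of strictly smaller rank), where $\co'\setminus\co_\ast^{(1)}$ is still distinguished, and one concludes by the inductive hypothesis, checking that the composition of the two quotient steps gives $\S/\co'$. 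If $\co_\ast^{(1)}\not\subset\co'$, I would instead use that $\S^{(1)}$ is erasable to shift the analysis: replacing $S^{(1)}$ by a smaller weakly-color-saturated erasable piece (peeling off one extremal simple root of a type-$\mathsf A$ component at a time, as in Lemma~\ref{lem:A}(2)), one eventually reaches a situation controlled by the classification — namely the weakly-color-connected component meeting $\co'$ is primitive, an $n$-comb, a plug, a tail, or one of the explicitly listed non-erasable cases — and for each such case the goodness of the induced quotient is checked directly against the minimal quotients computed in Appendices~\ref{app:low} and \ref{app:qpss} and the remarks of \ref{subsec:qpss}.

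The main obstacle I anticipate is the bookkeeping in the second case: ensuring that the quotient operation $\S\mapsto\S/\co'$ genuinely factors as a composition of quotient operations adapted to the weakly-color-connected / glued structure, and that at each stage the intermediate triple is a spherical system rather than merely a formal triple. Concretely, one must verify that the semigroup $\{\sigma\in\mathbb N\sr: c(D,\sigma)=0\ \forall D\in\co'\}$ and its minimal generators behave compatibly with localization on $S^{(1)}$ and $S^{(2)}$ and with the gluing data of Remark~\ref{rem:glue}; the potential failure is that a generator of $\sr/\co'$ could have support straddling the two pieces in a way not visible from either localization alone. The resolution is that, by Remark~\ref{rem:glue}, the only spherical roots of $\S$ straddling weakly-color-connected components come from the glued non-free colors, whose vanishing is already forced, so no new straddling generators appear; combined with the fact (from the proof of Proposition~\ref{prop:era} and the case analysis of non-erasable components) that every spherical system admits an erasable weakly-color-saturated piece of rank $<\mathrm{rank}\,\S$ whenever $\mathrm{rank}\,\S>0$, the induction closes. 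Thus every distinguished subset of colors is good, proving Theorem~\ref{prop:dist}.
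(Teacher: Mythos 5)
There is a genuine gap. Your first branch (when the homogeneous distinguished subset $\co_\ast^{(1)}$ of an erasable piece is contained in $\co'$) is fine and is essentially the recursion already used in the paper's Corollary following Proposition~\ref{prop:era}. But the entire difficulty of the theorem lies in the complementary branch, where $\co'$ meets several weakly-color-saturated pieces without containing a homogeneous distinguished subset of any single one of them, and there your proposal has no argument. Shrinking $S^{(1)}$ by ``peeling off one extremal simple root at a time'' does not address the obstruction, because the obstruction is not the size of the erasable piece but the way $\co'$ is distributed across the decomposition; and the appendices only list minimal quotients of rank $\leq2$, primitive, and primitive-$1$-comb systems, so ``checking directly against the classification'' does not cover a $\co'$ that genuinely straddles two glued components. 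Your proposed ``resolution'' via Remark~\ref{rem:glue} also does not work as stated: the danger is not spherical roots with support straddling two components (their supports are disjoint by construction), but minimal generators of the semigroup $\{\sigma\in\mathbb N\sr : c(D,\sigma)=0\ \forall D\in\co'\}$ that are $\mathbb N$-combinations of spherical roots coming from different components, about which Remark~\ref{rem:glue} says nothing.

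What the paper actually does in this branch is the missing core of the proof. It first reduces to minimal distinguished subsets $\co_\ast$ with $\sr\cap\sr/\co_\ast=\emptyset$ (so the quotient kills every spherical root), a reduction you do not make and which is what renders the problem finite. It then proves Lemma~\ref{lem:sg}: for a not weakly-color-connected system one can choose $S^{(1)}\sqcup S^{(2)}=S$ so that either $\co_\ast\subset\co^{(1)}$ or the ``gluing relation'' between $\co_\ast$ and the two pieces is carried by a single color $D^{(1)}$ and a single color $D^{(2)}$ paired with very few spherical roots. Under that condition a dichotomy argument --- either $D^{(1)},D^{(2)}$ are positive on $\sr^{(1)},\sr^{(2)}$ (reducing to a rank-$2$ localization), or some $D^{(i)}$ pairs positively with a second spherical root of $\sr^{(i)}$ (forcing $\sigma^{(1)},\sigma^{(2)}$ to be weakly-color-connected, a contradiction) --- shows that, apart from the two explicit families displayed in \eqref{eq:further}, only weakly-color-connected systems (whose minimal quotients have already been verified case by case) admit such a $\co_\ast$. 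Without this gluing-relation analysis, or some substitute for it, your induction does not close.
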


Here we use the notion of weak-color-connectedness defined in \ref{subsec:col}.

First, notice that it is enough to prove the theorem for minimal distinguished subsets $\co_\ast$ such that $\sr\cap\sr/\co_\ast=\emptyset$. Indeed, let $\sigma\in\sr\cap\sr/\co_\ast$, then for all $D\in\co_\ast$ one has $c(D,\sigma)=0$: for all $\alpha\in S$, $D\in\co_\ast\cap\co(\alpha)$ implies that $\alpha\perp\mathrm{supp}\,\sigma$ or $D\in\A(\alpha)$ with $\A(\alpha)\setminus\co_\ast\neq\emptyset$.  

Moreover, recall that all minimal distinguished subsets of rank $\leq2$ spherical systems or of primitive spherical systems or of spherical systems with a primitive positive 1-comb are good (Remarks \ref{rem:lgood} and \ref{rem:pgood}). The same is true for all weakly-color-connected spherical systems, indeed, recall that systems with a tail of type $c(m)$ (with overlapping support) behave as their analogues obtained by collapsing the tail.  

Other spherical systems with a minimal distinguished subset of colors $\co_\ast$ with $\sr\cap\sr/\co_\ast=\emptyset$ are the following:
\begin{equation}\label{eq:further}
\parbox{10.6cm}{
\[\begin{picture}(7650,4200)(-300,-2100)\put(0,0){\usebox{\mediumam}}\put(5400,0){\usebox{\bifurc}}\multiput(6600,-1200)(0,2400){2}{\usebox{\aone}}\multiput(6900,-600)(0,2400){2}{\line(1,0){450}}\put(7350,-600){\line(0,1){2400}}\put(6600,-600){\usebox{\tonw}}\put(6600,1800){\usebox{\tosw}}\end{picture}
\qquad\qquad\qquad
\begin{picture}(12600,4200)(-300,-2100)\put(0,0){\usebox{\shortam}}\put(3600,0){\usebox{\edge}}\put(5100,-1500){\diagramdsn}\end{picture}\]
}
\end{equation}

We claim that no other spherical system admits a minimal distinguished subset of colors $\co_\ast$ with $\sr\cap\sr/\co_\ast=\emptyset$.
 
Let $\S=(S^p,\sr,\A)$ be a spherical $R$-system with set of colors $\co$ and a minimal distinguished subset $\co_\ast\subset\co$. Let $\S^{(1)},\S^{(2)}$ be weakly-color-saturated localizations of $\S$ corresponding to $S^{(1)}\sqcup S^{(2)}=S$. If $\co_\ast\not\subset\co^{(1)}$, then every non-empty subset of $\co_\ast\cap\co^{(1)}$ that is distinguished in $\S^{(1)}$ must contain a color $D$ such that $c(D,\sigma)<0$ for some $\sigma\in\sr^{(2)}$, and vice versa. We will say that $D\in\co_\ast\cap\co^{(1)}$ and $\sigma\in\sr^{(2)}$ such that $c(D,\sigma)<0$ are in {\it gluing relation}. Assuming that $\co_\ast\cap\co^{(2)}$ is good distinguished in $\S^{(2)}$, if $\sigma\in\sr^{(2)}$ is in gluing relation with some $D\in\co_\ast\cap\co^{(1)}$, then there exists an element of $\mathbb N(\co_\ast\cap\co^{(2)})$ that is positive on $\sr^{(2)}$ and strictly positive on $\sigma$.

\begin{lemma}\label{lem:sg}
Let $\S$ be a not weakly-color-connected spherical $R$-system with a minimal distinguished subset of colors $\co_\ast$. Then there exist non-empty subsets $S^{(1)}$, $S^{(2)}$ with $S^{(1)}\sqcup S^{(2)}=S$ and weakly-color-saturated $\S^{(1)},\S^{(2)}$ such that $\co_\ast\subset\co^{(1)}$ or satisfying the following condition. 
\begin{equation}\label{eq:sg}
\left.
\parbox{65ex}{
There exist 
\begin{itemize}
\item[-] $D^{(1)}\in\co_\ast\cap\co^{(1)}$ and $\sigma^{(1)}\in\sr^{(1)}$ with $c(D^{(1)},\sigma^{(1)})\geq0$, 
\item[-] $D^{(2)}\in\co^{(2)}$ and $\sigma_k^{(2)}\in\sr^{(2)}$, for $k=1,\ldots,n$ and $n\geq 1$ (actually $\leq3$), with $c(D^{(2)},\sigma_k^{(2)})\geq0$, 
\end{itemize}
such that 
\begin{quote}
$D\in\co_\ast\cap\co^{(i)}$ and $\sigma\in\sr^{(j)}$ with $i\neq j$ are in gluing relation only if $D=D^{(1)}$ and $\sigma=\sigma_k^{(2)}$, or $D=D^{(2)}$ and $\sigma=\sigma^{(1)}$.
\end{quote}
}
\right\}
\end{equation}
\end{lemma}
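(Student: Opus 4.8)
The plan is to reduce the statement to a finite, tractable case analysis on the \emph{weakly-color-connected components} of $\S$, using the classification of cuspidal weakly-color-connected spherical systems established earlier in Section~\ref{subsec:col}. Since $\co_\ast$ is a minimal distinguished subset, every proper non-empty subset of $\co_\ast$ fails to be distinguished, so for each weakly-color-saturated splitting $S^{(1)}\sqcup S^{(2)}=S$ with $\co_\ast\not\subset\co^{(1)}$ we know that $\co_\ast\cap\co^{(1)}$ cannot be distinguished \emph{inside $\S^{(1)}$} unless it is ``held up'' by gluing relations: some color $D\in\co_\ast\cap\co^{(1)}$ has $c(D,\sigma)<0$ for a spherical root $\sigma\in\sr^{(2)}$ of the complementary part, and symmetrically. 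The goal is to show that one can always choose the splitting so that these gluing relations are as sparse as described in \eqref{eq:sg}: at most one color $D^{(1)}$ on the $S^{(1)}$-side in gluing relation (with at most three spherical roots $\sigma^{(2)}_k$ on the other side), and at most one color $D^{(2)}$ on the $S^{(2)}$-side in gluing relation (with a single spherical root $\sigma^{(1)}$).

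First I would fix, among all weakly-color-saturated splittings $S=S^{(1)}\sqcup S^{(2)}$ with $\co_\ast\not\subset\co^{(i)}$ for $i=1,2$, one that is extremal in a suitable sense — e.g.\ choosing $\S^{(1)}$ to be a single weakly-color-connected component (or a minimal weakly-color-saturated localization) meeting $\co_\ast$ nontrivially, and taking $S^{(2)}$ to be everything else. Then the analysis of erasable and quasi-erasable components carried out in \ref{subsec:col} applies: a weakly-color-connected component is isolated, erasable, quasi-erasable, a tail, a comb, or one of a short explicit list of exceptions. In the erasable case, $\co_\ast\cap\co^{(1)}$ already contains a homogeneous $\ast$-distinguished subset that is orthogonal to $\sr\setminus\sr^{(1)}$, hence $\co_\ast$ would not be minimal unless this subset coincides with $\co_\ast\cap\co^{(1)}$ and there are no gluing relations from the $S^{(1)}$-side at all; in the quasi-erasable case the same argument produces a single distinguished subset with a controlled (small) number of gluing relations, which one reads off from the classification lists. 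The remaining possibilities — tails of type $c(m)$ (handled by collapsing, as in \ref{subsec:qpss}), combs glued to systems with a free color, and the two extra configurations displayed in \eqref{eq:further} — each produce by direct inspection at most one color in gluing relation with at most three spherical roots on the other side (the number three coming precisely from the bifurcation/$c(m)$-collapse situations). Symmetrizing the roles of $S^{(1)}$ and $S^{(2)}$ and using that $\co_\ast\cap\co^{(2)}$ is (by induction on rank, via the already-proven goodness on smaller systems) good distinguished in $\S^{(2)}$, one extracts the colors $D^{(1)},D^{(2)}$ and spherical roots $\sigma^{(1)},\sigma^{(2)}_k$ with the stated sign conditions.

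The main obstacle I expect is bookkeeping: verifying clause \eqref{eq:sg} genuinely requires going through the list of cuspidal weakly-color-connected spherical systems that are \emph{not necessarily quasi-erasable} and, for each one, checking exactly which colors can enter a gluing relation and with how many spherical roots of a complementary component. The constraints of Remark~\ref{rem:glue} (a non-free color $D\in\A(\alpha_1)$ can only be glued to $\sigma_2$ if the other simple root $\alpha'_1$ carrying $D$ is itself non-orthogonal to $\sigma_2$; if both colors over $\alpha_1$ are non-free, no gluing with $\alpha_1$ at all) are what ultimately force the ``at most one $D^{(1)}$, at most one $D^{(2)}$'' shape, and the ``$\leq 3$'' bound for $n$ comes from the worst cases (the $d(m)$/bifurcation plugs and the collapsed $c(m)$-tails) where a single free color can see three spherical roots. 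Once this case analysis is laid out, the conclusion of the lemma follows by simply assembling the data; the genuinely delicate point, and the one deserving the most care, is ensuring no weakly-color-connected component has been overlooked and that the splitting can indeed always be chosen weakly-color-saturated on both sides.
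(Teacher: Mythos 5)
There is a genuine gap at the heart of your plan: the splitting you propose — take $S^{(1)}$ to be (the support of) a single weakly-color-connected component meeting $\co_\ast$, and $S^{(2)}$ everything else — does not in general satisfy \eqref{eq:sg}. A weakly-color-connected component can have a support whose Dynkin diagram meets its complement in two or more connected components; such a component is glued to the rest of $\S$ at several distinct simple roots, and the resulting gluing relations can then involve several distinct colors of $\co_\ast\cap\co^{(1)}$ paired with several spherical roots of $\sr^{(2)}$, and symmetrically. Condition \eqref{eq:sg} demands at most \emph{one} color on each side, so this is exactly the configuration your choice of splitting fails to rule out, and neither the erasable/quasi-erasable dichotomy nor an appeal to minimality of $\co_\ast$ repairs it.

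The paper's proof is organized precisely around this difficulty. It first isolates the problematic components (those for which at least two connected components of the support are non-orthogonal to the complement) and shows, by a case-by-case check against the classification, that any such component must carry a \emph{non-free} color in gluing relation with a spherical root of another component. It then observes that a non-free color can be in gluing relation with only one such spherical root $\sigma$ (this is where Remark~\ref{rem:glue} actually enters — you invoke it, but for the wrong conclusion), and uses the pair $(D,\sigma)$ to \emph{cut the Dynkin diagram} between $\mathrm{supp}\,\sigma$ and the simple roots carrying $D$; the subset $S^{(1)}$ is then the minimal weakly-color-saturated piece containing the connected component of the cut diagram that contains $\mathrm{supp}\,\sigma$. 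This $S^{(1)}$ is in general not a union of components in your sense. Only in the residual case, where every component touches its complement through a single simple root, does your ``take one component'' choice give the required splitting — and that is exactly how the paper concludes. Your proposal also leans on induction on rank and on goodness of $\co_\ast\cap\co^{(2)}$ in $\S^{(2)}$, which belong to the argument \emph{after} the lemma, not to the construction of the splitting itself. To fix the proposal you would need to add the dichotomy on the shape of the component's support and the cut construction; without them the case analysis cannot deliver the ``one color per side'' conclusion.
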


\begin{proof}
First, we can assume that all weakly-color-connected components satisfying 
\begin{equation}\label{eq:i}
\parbox{65ex}{
at least two connected components of the support are non-orthogonal to their complement
}
\end{equation}
have a non-free color that is in gluing relation with some spherical root of other weakly-color-connected components. Indeed, it can be checked case-by-case that a cuspidal weakly-color-connected spherical system that can satisfy the condition \eqref{eq:i} has no distinguished subset $\co_\ast$ such that 
\begin{itemize}
\item every minimal distinguished subset of $\co_\ast$ contains a free color and 
\item there exists a positive element of $\mathbb N\co_\ast$ that is strictly positive on some spherical root.
\end{itemize}

A non-free color $D$ can be in gluing relation with only one spherical root $\sigma$ of other weakly-color-connected components. If there is such a color $D$, cut the Dynkin diagram between the support of $\sigma$ and $\alpha$ for all $\alpha\in S$ such that $D\in\co(\alpha)$. Define $S^{(1)}$ as the minimal subset of $S$ containing the connected component of the Dynkin diagram (after the cut prescribed above) that contains $\mathrm{supp}\,\sigma$ and such that $\S^{(1)}$ is weakly-color-saturated. By the above assumption, $S^{(1)}$ and its complement do the job.

We are left with the case when there is no weakly-color-connected component satisfying the condition \eqref{eq:i}. Here there always exists a weakly-color-connected component whose support $S'$ contains only one simple root that is non-orthogonal to $S\setminus S'$: it is thus enough to set $S^{(1)}=S'$ or $S^{(1)}=S\setminus S'$.
\end{proof}

Under the condition \eqref{eq:sg} we conclude the proof. We keep the notation of the lemma.

Assume $n=1$ and set $\sigma^{(2)}=\sigma_1^{(2)}$. We claim that 
\begin{itemize}
\item[(a)] $D^{(1)}$ and $D^{(2)}$ are positive on $\sr^{(1)}$ and $\sr^{(2)}$, respectively, or 
\item[(b)] there exist $i$ (equal to 1 or 2) and a spherical root $\sigma\neq\sigma^{(i)}$ in $\sr^{(i)}$ with $c(D^{(i)},\sigma)>0$ (if $c(D^{(i)},\sigma^{(i)})>0$ this means that $D^{(i)}$ is not free).
\end{itemize}

Indeed, by the list of not weakly-color-connected rank 2 spherical systems it follows that $c(D^{(1)}+D^{(2)},\sigma^{(i)})\leq0$. Let $\tilde D=\sum n_D D$ be a positive element of $\mathbb N_{>0}\co_\ast$. Set $\tilde D^{(i)}=\sum n_D D$ for all $D\in\co_\ast\cap\co^{(i)}\setminus\{D^{(i)}\}$, notice that $\co_\ast\cap\co^{(i)}=\{D^{(i)}\}$ if and only if $D^{(i)}$ is positive on $\sr^{(i)}$. If there exists no $\sigma\neq\sigma^{(i)}$ in $\sr^{(i)}$ with $c(D^{(i)},\sigma)>0$, then $\tilde D^{(i)}$ is positive on $\sr^{(i)}\setminus\{\sigma^{(i)}\}$. If for both $i=1,2$ there exists no $\sigma\neq\sigma^{(i)}$ with $c(D^{(i)},\sigma)>0$, then $c(\tilde D^{(i)},\sigma^{(i)})\geq0$ for $i=1,2$. 
Therefore, under the latter condition, if (for $i$ equal to 1 or 2) $D^{(i)}$ is not positive on $\sr^{(i)}$ then $\co_\ast\cap\co^{(i)}\setminus\{D^{(i)}\}$ is not empty and $\tilde D^{(i)}$ is positive on $\sr^{(i)}$: a contradiction. 

To conclude. 
\begin{itemize}
\item[(a)] If the two colors $D^{(1)}$ and $D^{(2)}$ are positive on $\sr^{(1)}$ and $\sr^{(2)}$, respectively, namely $\co_\ast=\{D^{(1)},D^{(2)}\}$ and $c(D^{(1)}+D^{(2)},\sigma^{(i)})=0$, it is enough to look at the rank 2 case with spherical roots $\sigma^{(1)}$ and $\sigma^{(2)}$. 
\item[(b)] If $D^{(2)}$ was not free then $\sigma^{(2)}$ would be of type $a(m)$, but in this case $\sigma^{(1)}$ and $\sigma^{(2)}$ would be weakly-color-connected (contradiction).
\end{itemize}

Finally, if $n>1$ the argument is analogous: the only spherical systems (not primitive and without primitive positive 1-combs) that actually occur under the given hypothesis are reported in \eqref{eq:further} above.

\appendix

\section{Spherical systems of rank $\leq2$}\label{app:low}

Let $S_i$, $i=1,2$, be subsets of $S$ with $S_1\sqcup S_2=S$ and $S_1\perp S_2$. Let $\S_i=(S^p_i,\sr_i,\A_i)$, $i=1,2$, be spherical $R_i$-systems. The direct product of $\S_1$ and $\S_2$ is the spherical $R$-system $(S_1^p\sqcup S_2^p,\sr_1\sqcup\sr_2,\A_1\sqcup\A_2)$ where $c(D,\sigma)=0$ for all $D\in\A_i$, $\sigma\in\sr_j$ and $i\neq j$.

Here we give the Luna diagrams of all cuspidal spherical systems of rank 1 and of all cuspidal spherical systems of rank 2 that are not direct product of two spherical systems of rank 1, together with their minimal quotient spherical systems. 

For cuspidal spherical $R$-systems of rank 1, $n$ denotes the rank of $R$.

For cuspidal spherical systems of rank 2 that are not direct product of two spherical systems of rank 1, $p$ and $q$ denote the rank of the supports of the two spherical roots: we use identifications between spherical roots for small $p$ and $q$ as stated in Definition~\ref{def:sr}.

\subsection*{Rank 1}

\[
\]


\begin{thebibliography}{32}
\bibitem[Br07]{Br07}P.\ Bravi, \textit{Wonderful varieties of type
    $E$}, Represent. Theory, \textbf{11} (2007), 174--191.
\bibitem[BCF08]{BCF08}P.\ Bravi, S.\ Cupit-Foutou, \textit{Equivariant deformations of the affine multicone over a flag variety}, Adv.\ Math.\ \textbf{217} (2008), 2800--2821.
\bibitem[BCF09]{BCF09}P.\ Bravi, S.\ Cupit-Foutou, \textit{Classification of
  strict wonderful varieties}, Ann.\ Inst.\ Fourier (Grenoble) \textbf{60} (2010), 641--681.
\bibitem[BL09]{BL09}P.\ Bravi, D.\ Luna, \textit{An introduction to wonderful varieties with many examples of type F4}, J.\ Algebra (2010), doi:10.1016/j.jalgebra.2010.01.025~.
\bibitem[BP05]{BP05}P.\ Bravi, G.\ Pezzini, \textit{Wonderful varieties of type $D$}, Represent. Theory, \textbf 9 (2005), 578--637.
\bibitem[BP09]{BP09}P.\ Bravi, G.\ Pezzini, Wonderful varieties of type B and C, arXiv:0909.3771v1~.
\bibitem[Bo]{Bo}N.\ Bourbaki, \'El\'ements de math\'ematique. Groupes et Alg\`ebres de Lie. Chapitre IV: Groupes de Coxeter et syst\`emes de Tits. Chapitre V: Groupes engendr\'es par des r\'eflexions. Chapitre VI: Syst\`emes de racines. Actualit\'es Scientifiques et Industrielles, No.\ 1337 \textit{Hermann, Paris,} 1968.
\bibitem[CF08]{CF08}S.\ Cupit-Foutou, Invariant Hilbert schemes and wonderful varieties, arXiv:0811.1567v2~.
\bibitem[CF09]{CF09}S.\ Cupit-Foutou, Wonderful varieties: a geometrical realization, arXiv:0907.2852v1~.
\bibitem[Lo09]{Lo09}I.V.\ Losev, \textit{Uniqueness property for spherical homogeneous spaces}, Duke Math.\ J.\ \textbf{147} No.\ 2 (2009), 315--343.
\bibitem[Lu01]{Lu01}D.\ Luna, \textit{Vari\'et\'es sph\'eriques de type $A$}, Publ.\ Math.\ Inst.\ Hautes \'Etudes Sci.\ \textbf{94} (2001), 161--226.
\bibitem[P03]{P03}G.\ Pezzini, \textit{Wonderful varieties of type $C$}, Ph.D.\ Thesis, Dipartimento di Matematica, Universit\`a La Sapienza, Rome, 2003.
\bibitem[W96]{W96}B.\ Wasserman, \textit{Wonderful varieties of rank two}, Transform.\ Groups \textbf{1} (1996), no.\ 4, 375--403.
\end{thebibliography}
\end{document}